\newsavebox\myboxA
\newsavebox\myboxB
\newlength\mylenA
\newcommand*\xoverline[2][0.75]{%
    \sbox{\myboxA}{$\m@th#2$}%
    \setbox\myboxB\null
    \ht\myboxB=\ht\myboxA%
    \dp\myboxB=\dp\myboxA%
    \wd\myboxB=#1\wd\myboxA
    \sbox\myboxB{$\m@th\overline{\copy\myboxB}$}
    \setlength\mylenA{\the\wd\myboxA}
    \addtolength\mylenA{-\the\wd\myboxB}%
    \ifdim\wd\myboxB<\wd\myboxA%
       \rlap{\hskip 0.5\mylenA\usebox\myboxB}{\usebox\myboxA}%
    \else
        \hskip -0.5\mylenA\rlap{\usebox\myboxA}{\hskip 0.5\mylenA\usebox\myboxB}%
    \fi}
\newtheorem{theorem}{Theorem}[section]
\newtheorem{proposition}[theorem]{Proposition}
\newtheorem{corollary}[theorem]{Corollary}
\newtheorem{remark}[theorem]{Remark}
\newtheorem{lemma}[theorem]{Lemma}
\newtheorem{example}[theorem]{Example}
\def\vs{\vspace{.2cm}}
\begin{document}

\title[Congruences for spin characters...]{Congruences for spin characters of the double covers of the symmetric and alternating groups}
\author[R. Nath]{Rishi Nath}
\address{Department of Mathematics} 
\address{Massachusetts Institute of Technology, Cambridge MA 02139}
\address{rnath@mit.edu}
\address {York College, City University of New York, Jamaica, NY 11451}
\address{rnath@york.cuny.edu}
\author[J. A. Sellers]{James A. Sellers}
\address{Department of Mathematics, Penn State University, University Park, PA  16802}
\address {sellersj@psu.edu}


\date{\bf\today}

\begin{abstract}
Let $p$ be an odd prime. The bar partitions with sign and $p$-bar-core partitions with sign respectively label the spin characters and $p$-defect zero spin characters of the double cover of the symmetric group, and by restriction, those of the alternating group. The generating functions for these objects have been determined by J. Olsson. We study these functions from an arithmetic perspective, using classical analytic tools and elementary generating function manipulation to obtain many Ramanujan-like congruences. 
\end{abstract}

\maketitle


\noindent 2010 Mathematics Subject Classification: 05A17, 11P83
\bigskip

\noindent Keywords: partition, congruence, generating function

\section{Introduction}
\label{intro}
A partition of $n$ is a non-increasing sequence of positive integers that sum to $n$. Euler determined the generating function for $p(n),$ the number of partitions of $n$: 
\[
P(q):=\sum^{\infty}_{n=0}p(n)q^n=\prod_{n=1}^\infty\frac{1}{1-q^n}
\]
The partitions of $n$ label the irreducible characters of the symmetric group $S(n)$. The characters of the alternating group $A(n)$ are then labeled by restriction from $S(n).$ 
In particular, $p(n)$ gives the dimension of the character table of $S(n).$

Srinivasa Ramanujan was the first to notice several remarkable arithmetic properties of the partition function.  

\begin{theorem}
\label{ramanujan_congs}
For all $n\geq 0$,
\begin{eqnarray*}
p(5n + 4) &=& 0\pmod{5}, \\
p(7n + 5) &=& 0\pmod{7}, \\
p(11n + 6) &=& 0\pmod{11}.
\end{eqnarray*}
\end{theorem}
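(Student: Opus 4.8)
The plan is to obtain all three congruences by elementary manipulation of the generating function, exploiting the ``freshman's dream'' congruence $\prod_{n\ge 1}(1-q^n)^{p}\equiv\prod_{n\ge 1}(1-q^{pn})\pmod{p}$, which holds for every prime $p$ because $(1-x)^p\equiv 1-x^p\pmod p$. For $p=5$ I would first write
\[
\sum_{n\ge 0}p(n)q^{n+1}=\frac{q}{\prod_{n\ge1}(1-q^n)}=\frac{q\prod_{n\ge1}(1-q^n)^4}{\prod_{n\ge1}(1-q^n)^5}\equiv\frac{q\prod_{n\ge1}(1-q^n)^4}{\prod_{n\ge1}(1-q^{5n})}\pmod 5 .
\]
Since the denominator on the right is a power series in $q^5$, it suffices to prove that the coefficient of every power $q^{5m}$ in the numerator $q\prod_{n\ge1}(1-q^n)^4$ vanishes modulo $5$; the exponent $5m$ then forces $n+1=5m$, i.e. $n=5(m-1)+4$, which is exactly the desired progression.

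To analyze that numerator I would factor $\prod(1-q^n)^4=\prod(1-q^n)\cdot\prod(1-q^n)^3$ and substitute Euler's pentagonal number theorem together with Jacobi's identity $\prod(1-q^n)^3=\sum_{j\ge0}(-1)^j(2j+1)q^{j(j+1)/2}$. A typical exponent then has the form $1+\tfrac{k(3k-1)}2+\tfrac{j(j+1)}2$, and multiplying by $24$ turns the pentagonal and triangular numbers into squares, so that the condition ``exponent $\equiv0\pmod5$'' becomes $3(2j+1)^2+(6k-1)^2\equiv0\pmod5$. A quick inspection of the quadratic residues modulo $5$ (namely $\{0,1,4\}$) shows the only solution has $2j+1\equiv0\pmod5$, whence the Jacobi coefficient $(2j+1)$ is itself divisible by $5$ and the whole contribution dies. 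This gives $p(5n+4)\equiv0\pmod5$.

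For $p=7$ I would run the identical scheme with $\sum p(n)q^{n+2}$ and the factorization $\prod(1-q^n)^6=\bigl(\prod(1-q^n)^3\bigr)^2$, applying Jacobi's identity twice. Multiplying the resulting exponent $2+\tfrac{i(i+1)}2+\tfrac{j(j+1)}2$ by $8$ reduces matters to $(2i+1)^2+(2j+1)^2\equiv0\pmod7$; because $-1$ is not a quadratic residue modulo $7$, this forces both $2i+1$ and $2j+1$ to be divisible by $7$, so the product of the two Jacobi coefficients is divisible by $7^2$. This yields $p(7n+5)\equiv0\pmod7$ and, as a bonus, the stronger congruence modulo $49$.

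The main obstacle is $p=11$, where the same strategy stalls: the natural numerator is $q^5\prod(1-q^n)^{10}$, but $\prod(1-q^n)^{10}$ admits no clean theta-series factorization, so the quadratic-residue argument has nothing to act on. I would therefore either invoke Winquist's two-variable theta identity, which supplies exactly the factorization of $\prod(1-q^n)^{10}$ needed to repeat the coefficient analysis modulo $11$, or else pass to the theory of modular forms and study the action of the $U_{11}$ (Hecke) operator on the weight-$12$ cusp form $q\prod(1-q^n)^{24}$. The Winquist route stays within the elementary generating-function spirit of the present paper, but it is by far the most computationally demanding step of the argument.
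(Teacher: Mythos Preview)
Your sketch is correct and is essentially Ramanujan's own elementary argument for the moduli $5$ and $7$, together with the Winquist-identity (or modular-form) route for the modulus $11$. There is nothing to compare it against in the paper itself, however: Theorem~\ref{ramanujan_congs} is stated there purely as historical background and is not proved. The only proof the paper references is the one of Garvan, Kim, and Stanton via cranks and $t$-core partitions, which is combinatorially quite different from your generating-function approach. Their method constructs explicit statistics (cranks) on partitions that split the set $\{\lambda\vdash pn+r\}$ into $p$ equinumerous classes, thereby giving a bijective explanation of the divisibility; your approach instead reads the congruence off from theta-series expansions after applying the freshman's-dream reduction $(q;q)_\infty^p\equiv(q^p;q^p)_\infty\pmod{p}$. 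Each has its advantages: the crank proof handles all three primes uniformly and yields refined combinatorial information, while your argument is shorter for $p=5,7$ and even gives the bonus $p(7n+5)\equiv 0\pmod{49}$, at the cost of needing a separate and heavier tool (Winquist or Hecke operators) for $p=11$.
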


A $t$-core partition of $n$ is a partition in which no hook of size $t$ appears in its Young diagram. When $t$ is prime, the $t$-cores label the $t$-defect zero blocks of $S_n.$ In 1990, F. Garvan, D. Kim, and D. Stanton \cite{G-K-S} provided a beautiful proof of Theorem \ref{ramanujan_congs} using core partitions. The generating function for $t$-core partitions has itself become a rich topic of study. For example, work by F. Garvan \cite{Gar}, M. Hirschhorn and J. Sellers  \cite{H-S-1}, \cite{H-S-2}, L. Kolitsch and J. Sellers \cite{K-S} and N. Baruah and B. Berndt \cite{B-B}, has produced a multitude of Ramanujan-type congruences for $t$-core partitions.  

We now consider partitions with distinct parts. For any such partition $\lambda,$ shift the $i$th row of its Young diagram $i$ positions to the right. In this shifted diagram $S(\lambda)$ we associate a bar with a certain bar-length to each position. Then we can consider diagrams in which no bar of length $t$ occurs. Partitions of this type will be called $t$-bar-core (or $\bar{t}$-core) partitions. A. O. Morris \cite{M} proved that for $\hat{S}(n),$ the double cover of $S(n),$ bar partitions with sign label the spin characters of $\hat{S}(n)$ and, when $p$ odd, that $\bar{p}$-core partitions with sign label the $p$-defect zero spin characters of $\hat{S}(n)$.  Almost three decades later J. Humpreys \cite{H} and M. Cabanes \cite{C}, independently, proved a conjecture of Morris on the $p$-block distribution of characters of $\hat{S}(n)$. Spin characters and $p$-defect zero spin characters of $\hat{A}(n),$ the double cover of $A(n)$, are then obtained by restriction from $\hat{S}(n)$, using an application of Clifford theory. 

Recent analysis of spin characters of $\hat{S}(n)$ and $\hat{A}(n)$ has focused on their dimensionality (see \cite{BO},\cite{KT}). Here however we study congruences of generating functions for spin characters, $p$-defect zero spin characters of $\hat{S}(n)$ and $\hat{A}(n)$, and $\bar{t}$-cores. The functions for these are known, due to the work of J. Olsson.  

In section 2, we introduce the basics: partitions and bar-partitions, bar-core partitions, character theory, generating functions, and some identities. In section 3, we find a characterization of the number of spin characters of $\hat{S}(n)$ and $\hat{A}(n)$ modulo 2 and 3. We then use this to obtain infinitely many Ramanujan-like congruences modulo 2 and 3. In section 4, we prove a number of parity results for the number of $\bar{p}$-cores of $n$, as well as Ramanujan--like congruences for $p$-defect zero spin characters of $\hat{S}(n)$ and $\hat{A}(n)$.

\section{Preliminaries}
\label{sec:recurs}
\subsection{Partitions and bar partitions}
Let $\lambda$ be a partition of $n$ into $k$ parts.  We form  $[\lambda],$ the Young diagram of $\lambda,$ by stacking left-aligned rows of boxes, with the $r$th row having $a_r$ boxes, as $1\leq k$.  The $j$th box in the $i$th row will be in $(i,j)$-position.  The hook $h_{ij}$ associated to a box in $(i,j)$-position will be the union of the set of boxes in the $i$th row and to left of $(i,j)$, those in the $j$th column and below $(i,j)$, and the box $(i,j)$ itself. The length of $h_{ij}$ will be the total number of boxes in the set.  A $t$-core partition $\lambda$ is one in which no hooks of size $t$ appear in $[\lambda]$.

We say $\lambda$ is a bar partition of $n$ if $\lambda=(a_1,\cdots,a_k)$ and $a_1>a_2>\cdots>a_k$; that is, a partition with distinct parts. The set of bar partitions of $n$ is denoted $\hat{P}(n)$ and $|\hat{P}(n)|=q(n).$ Let $S(\lambda)$ be the shifted Young diagram of $\lambda.$ It is obtained from the usual Young diagram by shifting the $i$-th row $(i-1)$ positions to the right. 

As above, the $j$th box in the $i$th row will be in $(i,j)$-position. We denote the $(i,j)$-bar length as $\bar{h}_{ij}(\lambda)$. To each node $(i,j)$ in $S(\lambda)$ we associate a bar length as follows: the bar lengths in the $i$-th row are obtained by writing the following sets in decreasing order:
\[
{\mathcal H}_i(\lambda)=\{1,2,\cdots,a_i\}\cup\{a_i+a_j| j>i\}\backslash\{a_i-a_j|j>i\}
\]
We let $\hat{\mathcal H}=\cup_i{\mathcal H}_i(\lambda),$ a multiset. We denote the $(i,j)$-bar length by $\bar{h}_{ij}$. 
\begin{example}
Let $\lambda=(5,3,2).$ Then ${\mathcal H}_1=\{1,2,3,4,5\}\cup\{5+2,5+3\}\backslash{\{5-3,5-2\}}=\{1,2,4,5,7,8\}$ ${\mathcal H}_2=\{1,2,3\}\cup\{3+2\}\backslash\{3-2\}=\{2,3,5\}$ and ${\mathcal H}_3=\{1,2\}.$ 
See Figure 1.
\end{example}
\begin{figure}[h!]\label{x}
\caption{$S(\lambda)$ with bar lengths for $\lambda=(5,3,2)$}
\young(87541,:532,::21)
\end{figure}
\subsection{Bar-core partitions and the bar-abacus}
Let $p$ be an odd prime. Bar partitions $\lambda$ in which $p$ does not appear as bar length in $S(\lambda)$ are called $\bar{p}$-core (or $p$-bar-core) partitions.

Let the $p$-abacus consist of $p$-runners going from south-to-north numbered $0,1,\cdots,p-1.$ On each runner there are positions numbered $0,1,2,\cdots$. For $0\leq i\leq p-1$ we define 
$X^{\lambda}_i=\{a\in {\mathbb N}|$ there exists a $k$ such that $1\leq k\leq m$ and $a_k=ap+i.\}$
Then $p$-abaci of $\bar{p}$-core partitions can be described as follows:
\begin{enumerate}
\item There are no beads on the 0th runner
\item For the $1\leq j\leq p$, if $|X^{\lambda}_j|=k>0$ then there are beads in the first $k$ positions on the $j$th runner.
\item If $|X^{\lambda}_j|=k>0$ then $|X_{p-j}|=0$.
\end{enumerate}
For details on the abaci of bar-cores see \cite[Chapter 4]{O3}.
\subsection{Character theory}
The irreducible characters of $S(n)$ are labeled by the partitions of $n$. Let $p$ be a prime. Then the $p$-core partitions of $n$, that is, those in which no hook of size $p$ appear, label the $p$-defect zero blocks of the symmetric group $S(n).$

In 1911, Schur proved that the symmetric groups $S(n)$ have covering groups $\hat{S}(n)$ of order $2n!.$ Thus there is a non-split exact sequence 
\[
\begin{array}{ccccccccc} 
1 & \to & \langle z \rangle & \to & \hat{S}(n) & \to & S(n) & \to & 1\\
\end{array}
\]
where $\langle z\rangle$ is a central subgroup of order $2$ in $\hat{S}(n).$ 

The irreducible characters of $\hat{S}(n)$ which have $\langle z\rangle$ in their kernel will be referred to as {\it ordinary} characters of $\hat{S}(n).$ They are the complex irreducible characters of $S(n)\simeq \hat{S}(n)/\langle z\rangle$ labeled by the partitions of $n$. The other irreducible characters of $\hat{S}(n)$ are called spin characters, labeled canonically by the bar partitions of $n$. 

The set of bar partitions is divided into
\[
\hat{\mathcal P}^+(n)\;=\;\{\lambda\in \hat{\mathcal P}(n) | n -l(\lambda) \mbox{ even} \}
\]
\[
\hat{\mathcal P}^-(n)\;=\;\{\lambda\in \hat{\mathcal P}(n) | n -l(\lambda) \mbox{ odd} \}
\]
where $l(\lambda)$ is the length of $\lambda.$ Then each $\lambda \in {\hat{\mathcal P}}^+(n)$ labels a self-associate spin character of $\langle \lambda \rangle$ of $\hat{S}_n$ and each $\lambda \in {\hat{\mathcal P}}^-(n)$ labels a pair of non-self-associate irreducible spin characters $\langle \lambda \rangle$ and $\langle \lambda \rangle '$. 

\begin{example} \label{first_ex} Consider $\hat{S}_7.$ Then there are six non--associate irreducible spin characters ${\langle \lambda_i \rangle}, {\langle {\lambda}_i \rangle}'$ for $1\leq i\leq 3$ where $\lambda_1=(6,1), \lambda_2=(5,2), \lambda_3=(4,3)$. There are two self-associate irreducible spin characters ${\langle \lambda_4 \rangle}$ and ${\langle \lambda_5 \rangle}$ where $\lambda_4=(7)$ and $\lambda_5=(4,2,1).$
\end{example}
Let $A(n)$ be the alternating group on $n$ letters. Since the index $[S(n):A(n)]=2,$ an application of Clifford theory yields the following: two irreducible characters of $S(n)$ labeled by conjugate partitions restrict to the same irreducible character of $A(n)$; those labeled by self-conjugate partitions split into two distinct irreducible characters of $A(n).$ A similar restriction applies to the $p$-defect zero blocks of $A(n)$. 

From a similar application of Clifford theory, we obtain the character theory of $\hat{A}(n)$ from $\hat{S}(n)$ by restriction; that is a pair of non-self-associate spin characters of $\hat{S}(n)$ restrict to one of $\hat{A}(n)$ while a self-associate character of $\hat{S}(n)$ splits to two conjugate irreducible representations of $\hat{A}(n).$  This duality leads to the similar generating functions seen in Sections 2.4 and 2.5, and pairs of results in Sections 3.1 and 3.2; and 4.2 and 4.3.

When $\lambda\in \hat{\mathcal P}(n)$ then $\bar{f}_{\lambda}$ denotes the degree of the spin character(s) labelled by $\lambda$. Schur proved that if $\lambda=(a_1,a_2,\cdots,a_m)\in\hat{P}(n)$
then
\[
\bar{f}_{\lambda}=2^{\lfloor\frac{n-m}{2}\rfloor}\frac{n!}{\prod_i a_i}\prod_{i<j}\frac{a_i-a_j}{a_i+a_j}
\]
Then, by the definition of $\hat{\mathcal H}(\lambda)$ 
\[
\bar{f}_{\lambda}=2^{\lfloor\frac{n-m}{2}\rfloor}\frac{n!}{\prod_{h\in \hat{\mathcal H}(\lambda)}h}.
\]
When $p$ is an odd prime, we calculate $\bar{d}_p(\lambda)$ the $p$-defect of a spin character labeled by $\lambda$, using the following formula:
\[
\bar{d}_p(\lambda)=\nu_p(n!)-\nu_p(\prod_{h\in\hat{\mathcal H}(\lambda)}h)
\]
where $\nu_p$ is the $p$-adic valuation. In particular, the $p$-defect zero spin characters of $\hat{S}(n)$ are labeled precisely by $\bar{p}$-core partitions of $n$. To enumerate them we consider the number of $\bar{p}$-core partitions of $n$ in $\hat{\mathcal P}^+(n)$ plus twice the number of $\bar{p}$-core partitions in  $\hat{\mathcal P}^-(n).$ We will use this in the next section.

\subsection{Generating functions for $\hat{S}(n)$}
Throughout the remainder of this paper, we will use a notation for infinite products which is commonly known as the Pochhammer symbol.  This symbol is defined as follows:  

\begin{eqnarray*}
(a;q)_0 &=& 1, \\
(a;q)_n &=& (1-a)(1-aq)\dots (1-aq^{n-1}), \text{\ \ and}\\
(a;q)_\infty &=& \lim_{n\to\infty} (a;q)_n
\end{eqnarray*}
This notation is commonly used by members of the $q$--series community and allows for generating functions to be written in a compact form.  

The following propositions appear as equations 9.1, 9.5, 9.9(iii), 9.9(v) and 9.10 in Olsson \cite{O3} (see also \cite{MO}, \cite{O1}, and \cite{O2}).
\begin{proposition} The generating function for the number of irreducible characters of $S(n)$ is
\[
P(q)=\frac{1}{(q;q)_\infty}.
\]
\end{proposition}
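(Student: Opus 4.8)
The plan is to recognize this statement as a repackaging, in Pochhammer notation, of Euler's classical product formula for the partition function, combined with the standard labeling of irreducible characters. First I would invoke the fact---already recorded in the introduction---that the irreducible complex characters of $S(n)$ are in bijection with the partitions of $n$, a theorem of Frobenius. Consequently the number of irreducible characters of $S(n)$ is exactly $p(n)$, and the generating function in question is $\sum_{n=0}^\infty p(n)q^n = P(q)$.

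Second I would appeal to Euler's identity $P(q) = \prod_{n=1}^\infty \frac{1}{1-q^n}$, which is stated at the very start of the paper. The only remaining task is to rewrite this product using the Pochhammer symbol introduced just above the proposition. Since $(q;q)_\infty = \prod_{n=1}^\infty (1-q^n)$ by definition, taking the reciprocal gives $\frac{1}{(q;q)_\infty} = \prod_{n=1}^\infty \frac{1}{1-q^n} = P(q)$, which is the claimed equality.

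There is essentially no obstacle here: every ingredient is either a definition (the Pochhammer symbol) or a classical result already cited in the text (the character--partition correspondence and Euler's product). If one wanted a fully self-contained argument, the single nontrivial input would be Euler's formula itself, which follows from expanding each geometric factor $\frac{1}{1-q^n} = \sum_{k\ge 0} q^{nk}$ and observing that the coefficient of $q^m$ in the resulting product counts the ways of writing $m$ as a sum of parts, i.e.\ counts the partitions of $m$.
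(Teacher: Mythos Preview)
Your argument is correct and matches the paper's treatment: the paper does not prove this proposition at all but simply cites it from Olsson \cite{O3} and adds the parenthetical remark that ``this is simply the generating function for $p(n)$,'' which is exactly the content of your first two steps. Your rewriting via the Pochhammer symbol is the obvious and intended interpretation.
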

\noindent 
(As noted above, this is simply the generating function for $p(n),$ the number of integer partitions of $n.$)  

\begin{proposition} 
\label{genfn_spchar_hatS}
Let $f_{\hat{S}}(n)$ be the number of irreducible spin characters of $\hat{S}(n)$.  Then the generating function for $f_{\hat{S}}(n)$ is
\[
\sum_{n=0}^\infty f_{\hat{S}}(n)q^n = \hat{P}(q) =\frac{(q^2;q^2)_\infty}{(q;q)_\infty}\left(\frac{3}{2}-\frac{1}{2}\frac{(q^2;q^2)_\infty^2}{(q^4;q^4)_\infty}\right).
\]
\end{proposition}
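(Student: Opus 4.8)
The plan is to reduce the statement to a count of bar partitions weighted by a sign, and then evaluate two generating functions. From the description of spin characters in Section 2.3, each $\lambda \in \hat{\mathcal P}^+(n)$ labels a single self-associate spin character, while each $\lambda \in \hat{\mathcal P}^-(n)$ labels a pair of non-self-associate spin characters. Hence
\[
f_{\hat S}(n) = |\hat{\mathcal P}^+(n)| + 2|\hat{\mathcal P}^-(n)|.
\]
Using that every bar partition of $n$ lies in exactly one of $\hat{\mathcal P}^+(n)$ or $\hat{\mathcal P}^-(n)$, so that $|\hat{\mathcal P}^+(n)| + |\hat{\mathcal P}^-(n)| = q(n)$, I would rewrite this as
\[
f_{\hat S}(n) = \tfrac{3}{2}\,q(n) - \tfrac{1}{2}\big(|\hat{\mathcal P}^+(n)| - |\hat{\mathcal P}^-(n)|\big).
\]
The coefficients $\tfrac{3}{2}$ and $-\tfrac{1}{2}$ already match the shape of the claimed formula, so the problem splits into identifying the generating functions of $q(n)$ and of the signed difference $|\hat{\mathcal P}^+(n)| - |\hat{\mathcal P}^-(n)|$.

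The first is classical: since bar partitions are exactly partitions into distinct parts,
\[
\sum_{n=0}^\infty q(n)\, q^n = \prod_{n=1}^\infty (1+q^n) = \frac{(q^2;q^2)_\infty}{(q;q)_\infty},
\]
which is precisely the prefactor in the statement. For the signed difference, the key observation is that membership in $\hat{\mathcal P}^\pm(n)$ is governed by the parity of $n - l(\lambda)$, so that
\[
\sum_{n=0}^\infty \big(|\hat{\mathcal P}^+(n)| - |\hat{\mathcal P}^-(n)|\big) q^n = \sum_\lambda (-1)^{|\lambda| - l(\lambda)} q^{|\lambda|},
\]
the sum running over all partitions $\lambda$ into distinct parts. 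Since including a part of size $m$ raises $|\lambda| - l(\lambda)$ by $m-1$, this sign-weighted sum factors over potential parts as $\prod_{m \geq 1}\big(1 + (-1)^{m-1}q^m\big)$.

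I would then evaluate this product by separating odd from even $m$, obtaining $\prod_{m \text{ odd}}(1+q^m)\cdot\prod_{m \text{ even}}(1-q^m)$, and rewrite each factor in Pochhammer form via the standard identities $\prod_m(1+q^m) = (q^2;q^2)_\infty/(q;q)_\infty$ and $\prod_m(1+q^{2m}) = (q^4;q^4)_\infty/(q^2;q^2)_\infty$, together with $\prod_m(1-q^{2m}) = (q^2;q^2)_\infty$. After simplification this collapses to
\[
\frac{(q^2;q^2)_\infty^3}{(q;q)_\infty (q^4;q^4)_\infty}.
\]
Substituting both generating functions into the displayed expression for $f_{\hat S}(n)$ and factoring out $(q^2;q^2)_\infty/(q;q)_\infty$ yields exactly $\hat P(q)$ as stated. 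The main obstacle is the middle step: recognizing that the parity condition defining $\hat{\mathcal P}^\pm$ converts the character count into a sign-weighted partition sum, and then carrying the bookkeeping of the odd/even split cleanly through to a closed Pochhammer expression. Once that factorization is in hand, the remaining algebra is routine.
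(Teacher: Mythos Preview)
Your argument is correct. The decomposition $f_{\hat S}(n)=|\hat{\mathcal P}^+(n)|+2|\hat{\mathcal P}^-(n)|=\tfrac{3}{2}q(n)-\tfrac{1}{2}\bigl(|\hat{\mathcal P}^+(n)|-|\hat{\mathcal P}^-(n)|\bigr)$ is exactly right, the sign-weighted generating function factors as you claim since $(-1)^{|\lambda|-l(\lambda)}=\prod_i(-1)^{a_i-1}$ for a distinct-parts partition $\lambda=(a_1,\dots,a_k)$, and the Pochhammer simplification to $(q^2;q^2)_\infty^3/\bigl((q;q)_\infty(q^4;q^4)_\infty\bigr)$ checks out.

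As for comparison: the paper does not actually prove this proposition. It is quoted as a known result from Olsson's lecture notes \cite{O3} (equation~9.5 there), so there is no in-paper argument to compare against. Your derivation supplies a clean, self-contained proof that the paper omits; the underlying idea---splitting into the total count $q(n)$ and the signed difference governed by the parity of $n-l(\lambda)$---is the natural one and is essentially how Olsson arrives at the formula as well.
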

\begin{proposition} 
\label{genfn_spchar_barp}
Let $f_{\bar{p}}(n)$ be the number of $\bar{p}$-core partitions of $n.$ Then the generating function for $f_{\bar{p}}(n)$ is
\[
\sum_{n=0}^\infty f_{\bar{p}}(n)q^n = F_{\bar{p}}(q)=\frac{(q^2;q^2)_\infty(q^p;q^p)_\infty^{\frac{p+1}{2}}}{(q;q)_\infty(q^{2p};q^{2p})_\infty}.
\]
\end{proposition}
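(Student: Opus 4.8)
The plan is to derive the generating function $F_{\bar p}(q)$ directly from the abacus description of $\bar p$-core partitions given in Section 2.2, which reduces the counting problem to an independent choice of bead configurations on the runners. Recall that a $\bar p$-core has no beads on the $0$th runner, and that among the paired runners $\{j, p-j\}$ for $1 \le j \le (p-1)/2$ at most one carries beads, while the middle runner (when it exists) plays a special role. My first step would be to translate "$k$ beads in the first $k$ positions on runner $j$" into a contribution to the size $n$: a block of $k$ consecutive beads on runner $j$ of the abacus corresponds to a partition part (or collection of parts) whose total weight is an arithmetic-progression sum, so that the runner-$j$ contribution to the generating function is a sum over $k \ge 0$ of $q$ raised to that triangular-type quantity.

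Next I would package these contributions runner by runner. For each paired runner the choice is: put $k \ge 0$ beads on runner $j$ (weight $q^{pk^2/2 + \text{(linear in }k)}$-type expression) \emph{or} $k \ge 0$ beads on runner $p-j$, with the empty configuration shared. The key computational device is a Jacobi-triple-product or Gauss-type summation that collapses each such sum over $k$ into an infinite product in $q^p$ and $q^{2p}$; this is exactly the mechanism by which the factors $(q^p;q^p)_\infty^{(p+1)/2}$ and $(q^{2p};q^{2p})_\infty$ will appear. I would then multiply the contributions of all $(p-1)/2$ runner-pairs together, being careful to handle the central runner (index $p$, i.e. $0$, and the self-paired behavior) correctly, and to account for the factor $(q^2;q^2)_\infty/(q;q)_\infty$, which I expect to arise from the overall normalization by the generating function for strict partitions restricted to the core condition.

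Alternatively — and this is the cleaner route given that Proposition 2.4 is cited from Olsson \cite{O3} — I would prove the identity by exhibiting $F_{\bar p}(q)$ as the quotient of the bar-partition generating function $\hat P(q)$-type object by the "$\bar p$-quotient" generating function, using the bijection between a bar partition and its $\bar p$-core together with its $\bar p$-quotient. Under this bijection the $\bar p$-quotient contributes a free factor that is itself an infinite product in $q^p$, so that dividing the full strict-partition series by the quotient series isolates the core series $F_{\bar p}(q)$. The hard part will be bookkeeping the $\pm$ signs and the factor-of-two subtleties coming from the paired-runner condition (3) and from the distinction between $\hat{\mathcal P}^+$ and $\hat{\mathcal P}^-$: one must verify that the weight of each bead block is computed consistently with the $a_k = ap+i$ indexing, and that the Gaussian sums assemble into precisely the exponents $\tfrac{p+1}{2}$ and $1$ on the two product factors rather than some neighboring values. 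Once the per-runner sums are correctly identified, the final assembly into the stated closed form is a routine product manipulation.
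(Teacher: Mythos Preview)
The paper does not prove this proposition at all: it is quoted directly from Olsson \cite{O3} (equation 9.9(iii)) along with the surrounding propositions, so there is no ``paper's own proof'' to compare against. Your abacus approach is the standard one and, once the bookkeeping is done correctly, it does yield the stated product. Concretely, $k$ beads in the first $k$ positions of runner $j$ contribute the parts $j,\,p+j,\,\dots,\,(k-1)p+j$, of total size $p\binom{k}{2}+kj$; combining runner $j$ with runner $p-j$ and reindexing gives the bilateral sum $\sum_{k\in\mathbb{Z}} q^{p\binom{k}{2}+kj}$, which by Jacobi's Triple Product equals $(q^p;q^p)_\infty(-q^j;q^p)_\infty(-q^{p-j};q^p)_\infty$. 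Multiplying over $j=1,\dots,(p-1)/2$ and simplifying $\prod_{j=1}^{p-1}(-q^j;q^p)_\infty=(-q;q)_\infty/(-q^p;q^p)_\infty$ gives exactly the stated formula.

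A few points in your plan are off and would cause confusion if you carried them out as written. First, for odd $p$ there is \emph{no} middle or self-paired runner: runner $0$ is simply empty, and the remaining $p-1$ runners pair up perfectly as $\{j,p-j\}$, so the ``central runner'' discussion should be dropped. Second, the $\hat{\mathcal P}^{\pm}$ sign bookkeeping is irrelevant here, since $f_{\bar p}(n)$ counts $\bar p$-cores without sign; those subtleties only enter in Propositions~\ref{genfn_pdefectzero_S} and \ref{genfn_pdefectzero_A}. Third, the factor $(q^2;q^2)_\infty/(q;q)_\infty$ does not come from a separate ``normalization by the generating function for strict partitions'' --- it falls out of the product over runner-pairs once you rewrite $\prod_{p\nmid m}(1+q^m)$ in Pochhammer form. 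With those corrections your first route is clean and complete; the second (core/quotient) route also works and is in fact how Olsson organizes the argument.
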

\begin{proposition} Let $f^+_{\bar{p}}(n)$ and $f^-_{\bar{p}}(n)$ be the number of $\bar{p}$-core partitions of $n$ with positive and negative sign, respectively.
Then the generating function for $f^{\pm}_{\bar{p}}(n)$ is 
\[
F^{\pm}_{\bar{p}}(x)=\frac{1}{2}(F_{\bar{p}}(x)\pm P(-x)P(-x^p)^{1-t}).
\]
\end{proposition}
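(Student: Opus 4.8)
The plan is to reduce the two-sided formula to a single $2\times2$ linear system in generating functions. By definition each $\bar p$-core of $n$ carries exactly one sign, so $f_{\bar p}(n)=f^+_{\bar p}(n)+f^-_{\bar p}(n)$ and hence $F^+_{\bar p}(x)+F^-_{\bar p}(x)=F_{\bar p}(x)$ as formal power series, with $F_{\bar p}$ supplied by Proposition~\ref{genfn_spchar_barp}. Thus the entire content is the evaluation of the difference $F^+_{\bar p}(x)-F^-_{\bar p}(x)$; once it is identified with the series on the right-hand side of the statement, the relations $F^{\pm}_{\bar p}=\tfrac12\bigl(F_{\bar p}\pm(F^+_{\bar p}-F^-_{\bar p})\bigr)$ give the claim at once.

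To evaluate the difference I would first read the sign off combinatorially. A $\bar p$-core $\lambda\vdash n$ lies in $\hat{\mathcal P}^{+}(n)$ or $\hat{\mathcal P}^{-}(n)$ according as $n-l(\lambda)$ is even or odd, so
\[
F^+_{\bar p}(x)-F^-_{\bar p}(x)=\sum_{\lambda}(-1)^{|\lambda|-l(\lambda)}x^{|\lambda|},
\]
the sum running over all $\bar p$-cores. Because $\lambda$ has distinct parts, $(-1)^{|\lambda|-l(\lambda)}=\prod_i(-1)^{a_i-1}$, so each part $a$ carries the local weight $(-1)^{a-1}x^a$; equivalently this is the bar-core size-generating function after the substitution $x\mapsto-x$ together with $z=-1$ in the variable $z$ that marks the number of parts. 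Hence the object to compute is the two-variable series $G(x,z)=\sum_\lambda z^{l(\lambda)}x^{|\lambda|}$ over $\bar p$-cores, evaluated after $x\mapsto -x$ with $z=-1$.

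The decisive step is a closed-form evaluation of this specialization via the bar-abacus and the Jacobi triple product. Using the abacus rules above (runner $0$ empty; for each pair $\{j,p-j\}$ with $1\le j\le(p-1)/2$ at most one runner occupied, its beads packed at the bottom and contributing the distinct parts $j,j+p,j+2p,\dots$), the series factors over the $(p-1)/2$ runner-pairs, just as in the derivation of $F_{\bar p}$. Indexing the two runners of a pair by $k>0$ and $k<0$ and keeping the signs, the contribution of a pair collapses to the bilateral theta series $\sum_{k\in\mathbb Z}(-x^p)^{\binom k2}\bigl[(-1)^{j-1}x^j\bigr]^{k}$. Applying the Jacobi triple product $\sum_{k\in\mathbb Z}Q^{\binom k2}w^{k}=(Q;Q)_\infty(-w;Q)_\infty(-Qw^{-1};Q)_\infty$ to each factor produces $(-x^p;-x^p)_\infty$ together with $((-1)^{j}x^{j};-x^p)_\infty$ and $((-1)^{j-1}x^{p-j};-x^p)_\infty$. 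Multiplying over $j$, the $(-x^p;-x^p)_\infty$ factors give $(-x^p;-x^p)_\infty^{(p-1)/2}$, while the remaining Pochhammer factors are indexed by the residues $1,\dots,p-1$ with sign $(-1)^{\text{residue}}$ and — since $p$ is odd, so that a part $r+pn$ has the parity of $r+n$ — recombine into $(-x;-x)_\infty$ divided by its sub-product over multiples of $p$, which telescopes to $(-x^p;-x^p)_\infty$. The net result is the closed product $(-x;-x)_\infty\,(-x^p;-x^p)_\infty^{(p-3)/2}$, the sign generating function recorded (in Pochhammer form) on the right-hand side of the Proposition.

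The main obstacle is precisely this final simplification. The Jacobi-triple-product output is a product of $(p-1)/2$ pairs of Pochhammer symbols carrying the interlaced signs $(-1)^{j}$ and $(-1)^{j-1}$ on the two runners of each pair, and one must verify that these signs are exactly what makes the residue classes $1,\dots,p-1$ reassemble into $(-x;-x)_\infty/(-x^p;-x^p)_\infty$; the parity argument using that $p$ is odd is the delicate point. Everything else — the splitting $F_{\bar p}=F^+_{\bar p}+F^-_{\bar p}$, the reading of the sign as the product of local weights $(-1)^{a-1}$, and the concluding division by $2$ — is routine, so only the signed theta-product evaluation (equivalently, Olsson's computation of equation~9.10) carries real weight.
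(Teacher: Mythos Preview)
The paper does not supply its own proof of this proposition: it is quoted from Olsson~\cite{O3} (his equation~9.9(v)), and the companion identity $F^{+}_{\bar p}(q)-F^{-}_{\bar p}(q)=(-q;-q)_\infty(-q^p;-q^p)_\infty^{t-1}$ from the same source is later invoked verbatim in the proof of Theorem~\ref{pdefectzero_S_mod3}. So there is no in-paper argument to compare against.

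Your outline is nonetheless the standard derivation and is correct. The sum $F^{+}_{\bar p}+F^{-}_{\bar p}=F_{\bar p}$ is immediate; the signed difference $\sum_{\lambda}(-1)^{|\lambda|-l(\lambda)}x^{|\lambda|}$ factors over the runner-pairs of the $\bar p$-abacus exactly as you describe, each pair contributing the bilateral series $\sum_{k\in\mathbb Z}(-x^p)^{\binom{k}{2}}\bigl[(-1)^{j-1}x^{j}\bigr]^{k}$, which the Jacobi triple product converts to $(-x^p;-x^p)_\infty\,((-1)^{j}x^{j};-x^p)_\infty\,((-1)^{j-1}x^{p-j};-x^p)_\infty$. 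The parity check $(-1)^{r+mp}=(-1)^{r}(-1)^{m}$ for odd $p$ then reassembles the $p-1$ residue-class factors into $(-x;-x)_\infty/(-x^p;-x^p)_\infty$, yielding
\[
F^{+}_{\bar p}(x)-F^{-}_{\bar p}(x)=(-x;-x)_\infty\,(-x^p;-x^p)_\infty^{(p-3)/2},
\]
which (with $t-1=(p-3)/2$) is precisely the form the paper uses in the proof of Theorem~\ref{pdefectzero_S_mod3}. One notational caution: the symbols $P$ and $t$ in the proposition as printed are carried over from Olsson's conventions rather than the $P(q)=1/(q;q)_\infty$ defined earlier in the paper; read against Theorem~\ref{pdefectzero_S_mod3}, the intended difference term is the product you obtain, not its reciprocal.
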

\begin{proposition}
\label{genfn_pdefectzero_S} Let $f^{0}_{\hat{S},p}(n)$ be the number of $p$-defect zero spin characters of $\hat{S}(n).$  Then the generating function for $f^{0}_{\hat{S},p}(n)$ is
\[
\sum_{n=0}^\infty f^{0}_{\hat{S},p}(n)q^n = \hat{F}_{\bar{p}}(q)= F^+_{\bar{p}}(q) + 2F^-_{\bar{p}}(q).
\]
\end{proposition}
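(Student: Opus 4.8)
The plan is to read this proposition as a direct bookkeeping consequence of the character-theoretic facts already assembled in Section 2.3, together with the sign decomposition of bar partitions recorded there. The statement involves no new analysis: the generating-function formulas for $F^+_{\bar{p}}(q)$ and $F^-_{\bar{p}}(q)$ are supplied by the previous proposition, so the only task is to justify the weighted count $f^{0}_{\hat{S},p}(n) = f^+_{\bar{p}}(n) + 2f^-_{\bar{p}}(n)$ at the level of integer sequences and then invoke linearity.

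First I would recall the two labeling facts established in Section 2.3. The $p$-defect zero spin characters of $\hat{S}(n)$ are labeled precisely by the $\bar{p}$-core partitions of $n$; and a bar partition $\lambda$ of $n$ contributes characters according to its sign, with $\lambda \in \hat{\mathcal P}^+(n)$ (the case $n - l(\lambda)$ even) labeling a single self-associate spin character $\langle \lambda \rangle$, while $\lambda \in \hat{\mathcal P}^-(n)$ (the case $n - l(\lambda)$ odd) labels a pair of non-self-associate spin characters $\langle \lambda \rangle$ and $\langle \lambda \rangle'$. Intersecting the $\bar{p}$-core condition with each sign class, a $\bar{p}$-core partition in $\hat{\mathcal P}^+(n)$ contributes exactly one $p$-defect zero spin character and a $\bar{p}$-core partition in $\hat{\mathcal P}^-(n)$ contributes exactly two.

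Second I would carry out the count. Summing the contribution over all $\bar{p}$-core partitions of $n$, partitioned by sign, yields
\[
f^{0}_{\hat{S},p}(n) = f^+_{\bar{p}}(n) + 2f^-_{\bar{p}}(n),
\]
which is exactly the enumeration prescription stated at the close of Section 2.3. Finally, multiplying by $q^n$ and summing over $n \geq 0$, and using that $\sum_n f^+_{\bar{p}}(n)q^n = F^+_{\bar{p}}(q)$ and $\sum_n f^-_{\bar{p}}(n)q^n = F^-_{\bar{p}}(q)$ by the preceding proposition, linearity of the generating functional gives
\[
\sum_{n=0}^\infty f^{0}_{\hat{S},p}(n)q^n = F^+_{\bar{p}}(q) + 2F^-_{\bar{p}}(q) = \hat{F}_{\bar{p}}(q),
\]
as required.

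There is no genuine analytic obstacle here; the whole content is the correct translation of the representation theory into a sign-weighted count. The single point that requires care is orientation of the sign convention: one must confirm that the factor of $2$ attaches to the negative-sign class $\hat{\mathcal P}^-(n)$, i.e.\ to the non-self-associate (hence paired) characters, and not to the self-associate class $\hat{\mathcal P}^+(n)$. Getting that matchup right between the parity of $n - l(\lambda)$ and the self-associate/non-self-associate dichotomy is the only place where an off-by-one or swapped-sign error could creep in, so I would verify it against the small case $\hat{S}_7$ of Example \ref{first_ex} before concluding.
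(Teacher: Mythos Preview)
Your argument is correct and is exactly the natural derivation: the labeling of $p$-defect zero spin characters by $\bar p$-core partitions, combined with the one-versus-two count according to sign, gives $f^{0}_{\hat{S},p}(n) = f^+_{\bar{p}}(n) + 2f^-_{\bar{p}}(n)$, and summing yields the generating-function identity.

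Note, however, that the paper does not supply its own proof of this proposition. It is stated together with the surrounding propositions as a result quoted from Olsson \cite{O3} (specifically, equation 9.10 there). So there is no in-paper proof to compare against; what you have written is the expected justification, and indeed the paper's Section 2.3 ends by spelling out precisely the weighted count you use.
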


\subsection{Generating functions for $\hat{A}(n)$}
The following propositions appear as equations 9.11 and 9.12 in Olsson \cite{O3}. 
\begin{proposition} 
\label{genfn_spchar_hatA}
Let $f_{\hat{A}}(n)$ be the number of irreducible spin characters of $\hat{A}(n)$.  Then the generating function for $f_{\hat{A}}(n)$ is
\[
\sum_{n=0}^\infty f_{\hat{A}}(n)q^n =
\hat{\hat{P}}(q)=\frac{(q^2;q^2)_\infty}{(q;q)_\infty}\left(\frac{3}{2}+\frac{1}{2}\frac{(q^2;q^2)_\infty^2}{(q^4;q^4)_\infty}\right).
\]
\end{proposition}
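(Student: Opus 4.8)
The plan is to convert the Clifford-theoretic description of the spin characters of $\hat{A}(n)$, already recorded in Section 2.3, into a statement about bar partitions, and then to reduce everything to the generating function $\hat{P}(q)$ for $f_{\hat{S}}(n)$ from Proposition \ref{genfn_spchar_hatS}. Recall that a self-associate spin character of $\hat{S}(n)$ (equivalently, a label $\lambda\in\hat{\mathcal P}^+(n)$) splits into two irreducible spin characters of $\hat{A}(n)$, while a non-self-associate pair $\langle\lambda\rangle,\langle\lambda\rangle'$ of $\hat{S}(n)$ (a label $\lambda\in\hat{\mathcal P}^-(n)$) restricts to a single irreducible spin character of $\hat{A}(n)$. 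Counting labels with these multiplicities gives
\[
f_{\hat{A}}(n)=2\,|\hat{\mathcal P}^+(n)|+|\hat{\mathcal P}^-(n)|,
\qquad
f_{\hat{S}}(n)=|\hat{\mathcal P}^+(n)|+2\,|\hat{\mathcal P}^-(n)|,
\]
the second equality being the corresponding count for $\hat{S}(n)$ that underlies Proposition \ref{genfn_spchar_hatS}.

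Adding these two identities, the weightings combine so that every bar partition of $n$ is counted exactly three times, independent of its sign:
\[
f_{\hat{S}}(n)+f_{\hat{A}}(n)=3\bigl(|\hat{\mathcal P}^+(n)|+|\hat{\mathcal P}^-(n)|\bigr)=3\,q(n).
\]
Passing to generating functions and using Euler's identity $\sum_{n\ge0}q(n)q^n=(-q;q)_\infty=\dfrac{(q^2;q^2)_\infty}{(q;q)_\infty}$, this yields
\[
\hat{\hat{P}}(q)=3\,\frac{(q^2;q^2)_\infty}{(q;q)_\infty}-\hat{P}(q).
\]
Substituting the formula for $\hat{P}(q)$ and simplifying the constant $3-\tfrac{3}{2}=\tfrac{3}{2}$ produces exactly the claimed expression, the sole change from $\hat{P}(q)$ being that the second term inside the parentheses now enters with a plus sign.

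As an independent check (and an alternative that avoids invoking the $\hat{S}(n)$ formula), I would compute the signed generating function directly. Writing $f_{\hat{A}}(n)=\tfrac{3}{2}q(n)+\tfrac{1}{2}\bigl(|\hat{\mathcal P}^+(n)|-|\hat{\mathcal P}^-(n)|\bigr)$ and noting that the signed count has generating function $\sum_{\lambda}(-1)^{|\lambda|-l(\lambda)}q^{|\lambda|}=\prod_{n\ge1}\bigl(1-(-q)^n\bigr)$, where the sum runs over partitions into distinct parts, the separation into even- and odd-indexed factors together with three applications of Euler's product identity gives $\prod_{n\ge1}(1-(-q)^n)=\dfrac{(q^2;q^2)_\infty^3}{(q;q)_\infty(q^4;q^4)_\infty}$, and the two routes agree. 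The only genuine subtleties here are bookkeeping ones: fixing the Clifford-theory multiplicities and the identification of self-associate characters with $\hat{\mathcal P}^+(n)$ correctly, so that the coefficients $2$ and $1$ are not interchanged, and tracking the sign conventions in the signed product so that the second term ends up with a plus rather than a minus. Neither step is computationally deep, so I expect the main care to go into stating the character-theoretic reduction precisely.
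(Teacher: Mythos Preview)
Your argument is correct. The Clifford-theoretic counts $f_{\hat{A}}(n)=2|\hat{\mathcal P}^+(n)|+|\hat{\mathcal P}^-(n)|$ and $f_{\hat{S}}(n)=|\hat{\mathcal P}^+(n)|+2|\hat{\mathcal P}^-(n)|$ are exactly what Section~2.3 records, and adding them gives $f_{\hat{S}}(n)+f_{\hat{A}}(n)=3q(n)$; substituting $\hat{P}(q)$ from Proposition~\ref{genfn_spchar_hatS} then yields the stated formula for $\hat{\hat{P}}(q)$. Your alternative computation of the signed generating function $\sum_{\lambda}(-1)^{|\lambda|-l(\lambda)}q^{|\lambda|}=\prod_{n\ge1}(1-(-q)^n)=(q^2;q^2)_\infty^3/\bigl((q;q)_\infty(q^4;q^4)_\infty\bigr)$ is also correct and gives an independent derivation.

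There is nothing in the paper to compare against: the paper does not prove Proposition~\ref{genfn_spchar_hatA} at all, but simply quotes it as equation~9.11 of Olsson~\cite{O3}. Your write-up therefore supplies a self-contained proof where the paper only gives a citation. The derivation you give is essentially the one Olsson uses (split $q(n)$ by sign, compute the signed piece as $(-q;-q)_\infty$), so there is no substantive methodological difference, only that you have spelled it out explicitly.
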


\begin{proposition} 
\label{genfn_pdefectzero_A}Let $f^{0}_{\hat{A},p}(n)$ be the number of $p$-defect zero spin characters of $\hat{A}(n).$  Then the generating function for $f^{0}_{\hat{A},p}(n)$ is
\[
\sum_{n=0}^\infty f^{0}_{\hat{A},p}(n)q^n = \hat{\hat{F}}_{\bar{p}}(q)= 2F^+_{\bar{p}}(q) + F^-_{\bar{p}}(q).
\]
\end{proposition}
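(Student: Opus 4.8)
The plan is to reduce the asserted generating-function identity to the coefficient-wise statement
\[
f^{0}_{\hat{A},p}(n) = 2f^+_{\bar{p}}(n) + f^-_{\bar{p}}(n),
\]
and then to establish this count by the same Clifford-theoretic bookkeeping that underlies Proposition \ref{genfn_pdefectzero_S} for $\hat{S}(n)$, but with the roles of the self-associate and non-self-associate characters interchanged. Since $F^+_{\bar{p}}$ and $F^-_{\bar{p}}$ are by definition the generating functions for $f^+_{\bar{p}}(n)$ and $f^-_{\bar{p}}(n)$, the displayed coefficient identity is equivalent to the claimed equality $\hat{\hat{F}}_{\bar{p}}(q) = 2F^+_{\bar{p}}(q) + F^-_{\bar{p}}(q)$.

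First I would invoke the Clifford theory for the index-two normal subgroup $\hat{A}(n)$ of $\hat{S}(n)$ recorded above: a $\lambda \in \hat{\mathcal P}^+(n)$ labels a self-associate spin character of $\hat{S}(n)$, which restricts to $\hat{A}(n)$ as a sum of two distinct conjugate irreducible spin characters; whereas a $\lambda \in \hat{\mathcal P}^-(n)$ labels a pair $\langle\lambda\rangle,\langle\lambda\rangle'$ of non-self-associate characters, which fuse on restriction to a single irreducible spin character of $\hat{A}(n)$. This is precisely the reverse of the $\hat{S}(n)$ situation, where a $\hat{\mathcal P}^+$-partition contributes one character and a $\hat{\mathcal P}^-$-partition contributes a pair; it is what interchanges the coefficients $1$ and $2$ relative to Proposition \ref{genfn_pdefectzero_S}.

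Next I would confirm that this restriction map carries the $p$-defect zero spin characters of $\hat{S}(n)$ bijectively onto those of $\hat{A}(n)$, so that the preceding count may be confined to the $\bar{p}$-core partitions. This is the one place where the hypothesis that $p$ is odd enters, and I expect it to be the only substantive point. Because $[\hat{S}(n):\hat{A}(n)] = 2$ is coprime to $p$, the $p$-parts of the two group orders agree; and for a self-associate character of degree $d$ splitting into two constituents of degree $d/2$ one has $\nu_p(d) = \nu_p(d/2)$ since $p \neq 2$. Hence $|\hat{S}(n)|_p \mid d$ if and only if $|\hat{A}(n)|_p \mid d/2$, while for an irreducibly restricting character the degree is unchanged. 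Thus the defect-zero property is inherited in both directions of the Clifford correspondence, and, since every irreducible spin character of $\hat{A}(n)$ occurs in the restriction of one of $\hat{S}(n)$, no defect-zero character of $\hat{A}(n)$ is missed.

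Combining these points, each $\bar{p}$-core $\lambda \in \hat{\mathcal P}^+(n)$ contributes two $p$-defect zero spin characters of $\hat{A}(n)$ and each $\bar{p}$-core $\lambda \in \hat{\mathcal P}^-(n)$ contributes one, which yields $f^{0}_{\hat{A},p}(n) = 2f^+_{\bar{p}}(n) + f^-_{\bar{p}}(n)$ and hence the stated generating function $\hat{\hat{F}}_{\bar{p}}(q) = 2F^+_{\bar{p}}(q) + F^-_{\bar{p}}(q)$.
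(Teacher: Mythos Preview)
Your argument is correct. Note, however, that the paper does not supply its own proof of this proposition: it simply cites it as equation 9.12 in Olsson \cite{O3}. What you have written is exactly the Clifford-theoretic justification the paper alludes to in Section~2.3 (``a pair of non-self-associate spin characters of $\hat{S}(n)$ restrict to one of $\hat{A}(n)$ while a self-associate character of $\hat{S}(n)$ splits to two conjugate irreducible representations of $\hat{A}(n)$''), together with the observation that, since $p$ is odd and $[\hat{S}(n):\hat{A}(n)]=2$, the $p$-defect is unchanged under this correspondence. So your proof is in the spirit the paper intends and matches the duality it invokes between Propositions~\ref{genfn_pdefectzero_S} and~\ref{genfn_pdefectzero_A}; there is nothing to compare against beyond the citation.
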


\subsection{Generating Function Manipulation Tools}
In order to prove the arithmetic properties which will be outlined below, we will utilize a few classical product--to--sum results.  We highlight those results here.  

\begin{lemma}
\label{JacobiTPI}
(Jacobi's Triple Product Identity)
For $z\neq 0$ and $\vert \,q\,\vert < 1,$ 
$$
\sum_{n=-\infty}^\infty z^nq^{n^2} = \prod_{n=0}^\infty (1-q^{2n+2})(1+zq^{2n+1})(1+z^{-1}q^{2n+1}).
$$
\end{lemma}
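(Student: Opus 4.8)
The plan is to treat the right-hand side as a Laurent series in $z$ whose coefficients are power series in $q$, to determine those coefficients from a functional equation, and finally to pin down a single undetermined constant. Since $|q| < 1$, for fixed $z \neq 0$ the infinite product
\[
G(z) = \prod_{n=0}^\infty (1-q^{2n+2})(1+zq^{2n+1})(1+z^{-1}q^{2n+1})
\]
converges and, viewed as a function of $z$ on $\mathbb{C}\setminus\{0\}$, is analytic with a Laurent expansion $G(z) = \sum_{n=-\infty}^\infty c_n z^n$, where each $c_n = c_n(q)$ is a power series in $q$. The goal is to show $c_n = q^{n^2}$ for all $n$.

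First I would extract a functional equation by shifting the products. Replacing $z$ by $q^2 z$ deletes the factor $(1+zq)$ from the first product and inserts the factor $(1+z^{-1}q^{-1})$ into the second, so that
\[
G(q^2 z) = \frac{1+z^{-1}q^{-1}}{1+zq}\,G(z) = z^{-1}q^{-1} G(z),
\]
using $1+z^{-1}q^{-1} = z^{-1}q^{-1}(1+zq)$. Comparing coefficients of $z^n$ on both sides of $q\,z\,G(q^2 z) = G(z)$ yields the recurrence $c_n = q^{2n-1} c_{n-1}$, and since $\sum_{k=1}^n (2k-1) = n^2$ this telescopes to $c_n = q^{n^2} c_0$ for every $n$ (the same relation holds for negative indices, consistent with the symmetry $z \leftrightarrow z^{-1}$). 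Hence $G(z) = c_0(q) \sum_{n=-\infty}^\infty z^n q^{n^2}$, where $c_0(q)$ is independent of $z$.

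The main obstacle is to prove that $c_0(q) = 1$; the functional equation alone cannot detect this overall factor. To handle it I would specialize $z = i$. On the series side the terms for $n$ and $-n$ combine as $(i^n + i^{-n})q^{n^2}$, which vanishes for odd $n$, leaving $\sum_{m} (-1)^m q^{4m^2}$; on the product side the factors pair as $(1+iq^{2n+1})(1-iq^{2n+1}) = 1 + q^{4n+2}$. A short manipulation of the resulting products (splitting $(q^2;q^2)_\infty$ into its even and odd parts and using $(1-y)(1+y)=1-y^2$) identifies $G(i)$, up to the factor $c_0$, with the same expression evaluated at $q^4$, giving $c_0(q) = c_0(q^4)$. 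Iterating, $c_0(q) = c_0(q^{4^k})$ for all $k$, and letting $k \to \infty$ (so $q^{4^k} \to 0$, where every factor of the product tends to $1$) forces $c_0(q) = c_0(0) = 1$. I expect this constant-determination step to be the crux; the functional-equation and recurrence steps are routine once the product shift is set up correctly.
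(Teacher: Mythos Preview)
Your argument is correct: the functional equation $G(q^2z)=z^{-1}q^{-1}G(z)$ forces $c_n=q^{n^2}c_0$, and the specialization $z=i$ together with the product manipulation
\[
(q^2;q^2)_\infty(-q^2;q^4)_\infty=(q^4;q^8)_\infty^2(q^8;q^8)_\infty
\]
yields $c_0(q)=c_0(q^4)$, whence $c_0\equiv 1$ by iteration and continuity at $q=0$. Each step checks out as you wrote it.

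As for comparison with the paper: there is nothing to compare against, since the paper does not prove this lemma at all but simply refers the reader to Andrews, \emph{The Theory of Partitions}, Theorem~2.8. Your functional-equation method is one of the classical proofs (essentially Jacobi's own); the argument Andrews gives in that reference is somewhat different in flavor, deriving the identity from Euler's expansions of $\prod(1+zq^n)$ and $\prod(1-zq^n)^{-1}$ rather than from a recurrence on Laurent coefficients. Both routes are standard and of comparable length; yours has the advantage of being self-contained here, while the paper's citation keeps the exposition short since the Triple Product Identity is only used as a tool.
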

\begin{proof}
See \cite[Theorem 2.8]{AndrBook}.  
\end{proof}

An extremely important corollary of Lemma \ref{JacobiTPI} is commonly known as Euler's Pentagonal Number Theorem and is worth highlighting here.  

\begin{corollary}
\label{EulerPNT}
(Euler's Pentagonal Number Theorem)
$$
\sum_{n=-\infty}^\infty (-1)^nq^{n(3n-1)/2} = \prod_{n=0}^\infty (1-q^{n}).
$$
\end{corollary}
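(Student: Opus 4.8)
The plan is to derive the identity directly from Jacobi's Triple Product Identity (Lemma~\ref{JacobiTPI}) by a single specialization of its two free parameters, with no further machinery required. In the identity $\sum_{n=-\infty}^\infty z^n q^{n^2} = \prod_{n=0}^\infty (1-q^{2n+2})(1+zq^{2n+1})(1+z^{-1}q^{2n+1})$, I would substitute $q \mapsto q^{3/2}$ and $z \mapsto -q^{-1/2}$, treating the resulting expressions as formal power series (equivalently, analytic functions for $|q|<1$) in which the half-integer powers of $q$ ultimately cancel.

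First I would handle the left-hand side. Under this substitution the summand $z^n q^{n^2}$ becomes $(-1)^n q^{-n/2}\, q^{3n^2/2} = (-1)^n q^{(3n^2-n)/2}$, and since $(3n^2-n)/2 = n(3n-1)/2$ the sum collapses to exactly $\sum_{n=-\infty}^\infty (-1)^n q^{n(3n-1)/2}$, which is the left-hand side of the claimed identity.

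Next I would simplify the product, treating its three families of factors separately. The factor $1-q^{2n+2}$ becomes $1-q^{3n+3}$; the factor $1+zq^{2n+1}$ becomes $1-q^{-1/2}q^{3n+3/2}=1-q^{3n+1}$ using $z=-q^{-1/2}$; and the factor $1+z^{-1}q^{2n+1}$ becomes $1-q^{1/2}q^{3n+3/2}=1-q^{3n+2}$ using $z^{-1}=-q^{1/2}$. In each case the half-integer exponents recombine into integers. As $n$ ranges over $0,1,2,\dots$, the three arithmetic progressions $3n+1$, $3n+2$, $3n+3$ run through every positive integer exactly once, so the product reorganizes into $\prod_{m=1}^\infty (1-q^m)$, the right-hand side.

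The only delicate point is the appearance of the half-integer power $q^{1/2}$ in the intermediate substitution. This is harmless because every exponent on both sides of the final identity is an integer; I would make it rigorous either by working in the ring of formal power series in $q^{1/2}$ and noting that all genuinely fractional contributions cancel, or by first rescaling $q\mapsto q^2$ throughout to clear denominators and then renaming at the end. The one bookkeeping step worth stating explicitly is the verification that $\{3n+1,\,3n+2,\,3n+3\}_{n\ge 0}$ is a disjoint cover of the positive integers, which is what guarantees the reindexing of the product; beyond this the argument is a routine, if careful, manipulation of exponents.
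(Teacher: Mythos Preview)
Your derivation is correct and is exactly the standard specialization of Jacobi's Triple Product Identity that the paper intends by labeling this a corollary of Lemma~\ref{JacobiTPI}; the paper itself omits the proof entirely. Your handling of the half-integer powers is fine, and your reindexing of the product to $\prod_{m\ge 1}(1-q^m)$ in fact silently corrects the paper's typo $\prod_{n=0}^\infty(1-q^n)$, whose $n=0$ factor would vanish.
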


\begin{lemma}
\label{QPI}
(The Quintuple Product Identity)
For $t\neq 0,$ 
$$
\sum_{n= -\infty}^\infty s^{(3n^2+n)/2}(t^{3n} - t^{-3n-1}) = \prod_{n\geq 1} (1-s^n)(1-s^n t)(1-s^{n-1}t^{-1})(1-s^{2n-1}t^2)(1-s^{2n-1}t^{-2}).
$$
\end{lemma}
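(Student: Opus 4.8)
The plan is to derive the quintuple product identity directly from Jacobi's Triple Product Identity (Lemma \ref{JacobiTPI}), which has evidently been recorded for exactly this purpose. I will use the standard Pochhammer form of the triple product,
\[
(Q;Q)_\infty (x;Q)_\infty (Qx^{-1};Q)_\infty = \sum_{n=-\infty}^\infty (-1)^n Q^{n(n-1)/2} x^n ,
\]
which follows from Lemma \ref{JacobiTPI} by taking $z=-xq^{-1}$ and $Q=q^2$. First I would rewrite the product on the right of the statement in Pochhammer notation, grouping the factors by their degree in $t$:
\[
\prod_{n\geq 1}(1-s^n)(1-s^n t)(1-s^{n-1}t^{-1})(1-s^{2n-1}t^2)(1-s^{2n-1}t^{-2})
= (s;s)_\infty (st;s)_\infty (t^{-1};s)_\infty (st^2;s^2)_\infty (st^{-2};s^2)_\infty .
\]
The three factors $(s;s)_\infty(st;s)_\infty(t^{-1};s)_\infty$ form a complete triple product with base $Q=s$ and $x=st$, so they collapse to $A(t)=\sum_n (-1)^n s^{n(n+1)/2}t^n$. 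The two factors $(st^2;s^2)_\infty(st^{-2};s^2)_\infty$ form an \emph{incomplete} triple product with base $s^2$ and $x=st^2$: they are missing precisely the factor $(s^2;s^2)_\infty$. Supplying and then dividing by it turns $(s^2;s^2)_\infty(st^2;s^2)_\infty(st^{-2};s^2)_\infty$ into $B(t)=\sum_m (-1)^m s^{m^2}t^{2m}$.

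After these two applications of the triple product the whole statement reduces to the single claim
\[
\Bigl(\sum_{n}(-1)^n s^{n(n+1)/2}t^n\Bigr)\Bigl(\sum_{m}(-1)^m s^{m^2}t^{2m}\Bigr)
= (s^2;s^2)_\infty \sum_{k} s^{(3k^2+k)/2}\bigl(t^{3k}-t^{-3k-1}\bigr).
\]
I would prove this by extracting the coefficient of $t^N$ from the double sum on the left, where $N=n+2m$. Setting $n=N-2m$ and simplifying the quadratic exponent gives
\[
[t^N]\,A(t)B(t) = (-1)^N s^{(N^2+N)/2}\sum_{m}(-1)^m s^{\,3m^2-(2N+1)m}.
\]
Applying Lemma \ref{JacobiTPI} a \emph{third} time, now with $q=s^3$ and $z=-s^{-(2N+1)}$, evaluates the inner theta sum as $\prod_{k\geq 0}(1-s^{6k+6})(1-s^{6k+2-2N})(1-s^{6k+4+2N})$. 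This product contains the factor $1-s^0=0$ exactly when $N\equiv 1 \pmod 3$, which explains cleanly why that residue class is absent from the right-hand side. When $N\equiv 0$ or $2 \pmod 3$ the three exponent families $\{6k+6\},\{6k+2-2N\},\{6k+4+2N\}$ reorganize, after flipping any negative exponent via $1-s^{-a}=-s^{-a}(1-s^a)$, into the set of all positive even integers, so the product becomes $(s^2;s^2)_\infty$ up to an explicit power of $s$ and a sign; matching these against $N=3k$ and $N=-3k-1$ yields the two families on the right (I have checked the cases $N=0$ and $N=2$, which come out as $(s^2;s^2)_\infty$ and $-s\,(s^2;s^2)_\infty$, in agreement).

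The main obstacle is this final bookkeeping in the third step: correctly tracking the sign $(-1)^N$ together with the signs produced by the substitution $1-s^{-a}=-s^{-a}(1-s^a)$, and collecting the residual powers of $s$ so that they combine to the prescribed exponent $(3k^2+k)/2$ while the surviving product reassembles into $(s^2;s^2)_\infty$; the ranges of summation must be handled with care to be sure no factor is double counted or omitted. As a fallback I would keep in reserve a functional-equation argument: a short computation shows that both sides, as Laurent series in $t$, satisfy $f(st)=s^{-2}t^{-3}f(t)$ together with the reflection $f(t^{-1})=-t\,f(t)$ (which already forces $f(1)=0$); since the product and the sum obey the same quasi-periodicity, their ratio is invariant under $t\mapsto st$, and verifying that the sum vanishes at each zero of the product—for instance at $t=1$ and at $t^2=s$, both of which I have checked—would force the ratio to be a constant, fixed to $1$ by comparing a single coefficient. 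I prefer the direct route above because there the obstacle is a finite, explicit computation rather than a complete census of the zeros of the product.
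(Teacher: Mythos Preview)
Your derivation is correct; in particular the reduction to the single identity $A(t)B(t)=(s^2;s^2)_\infty\sum_k s^{(3k^2+k)/2}(t^{3k}-t^{-3k-1})$ via two triple products, and then the third application of Lemma~\ref{JacobiTPI} to the inner sum $\sum_m(-1)^m s^{3m^2-(2N+1)m}$, is exactly one of the classical routes to the quintuple product (see, e.g., Cooper's survey or Andrews--Berndt). The residual bookkeeping you flag as the obstacle is genuine but mechanical: writing $N=3k$ (resp.\ $N=3k+2$, handled as $N=-3(-k-1)-1$) and shifting the index in the offending factor block reassembles the three arithmetic progressions $\{6j+6\}$, $\{6j+2-2N\}$, $\{6j+4+2N\}$ into $\{2,4,6,\dots\}$ with a surplus monomial that cancels the $(-1)^N s^{(N^2+N)/2}$ against $s^{(3k^2+k)/2}$. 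Your functional-equation fallback is also sound, and indeed $f(st)=s^{-2}t^{-3}f(t)$ and $f(t^{-1})=-tf(t)$ already pin down the Laurent expansion up to a scalar.

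By contrast, the paper does not prove Lemma~\ref{QPI} at all: it treats the quintuple product as a known tool and simply cites Berndt for a proof. So your proposal goes well beyond what the paper supplies. What you gain is self-containment---the lemma is derived from Lemma~\ref{JacobiTPI}, which is already recorded---at the cost of a page of sign and exponent tracking that the paper prefers to outsource. For the purposes of this paper the citation is adequate, since the identity is classical and only its statement is needed downstream; but if you want an internal proof, the one you outline is the natural choice and requires no new ideas beyond careful execution.
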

\begin{proof}
For a proof of this result, as well as alternate forms of the Quintuple Product Identity, see \cite[pp.18--19]{Berndt}.  
\end{proof}

\section{Congruences for spin characters}
\subsection{Spin characters of $\hat{S}_n$}
\vs
We now consider arithmetic properties of the function $f_{\hat{S}}(n).$ Elementary generating function manipulation techniques, outlined below, allow us to prove congruences modulo both 2 and 3.  

We begin by characterizing $f_{\hat{S}}(n)$ modulo 2 based solely on $n.$
\begin{theorem}
\label{spchar_hatS_mod2}
For all $n\geq 1,$ 
$$f_{\hat{S}}(n) \equiv R(n) \pmod{2}$$
where $R(n)$ is the number of ways to represent $n$ as 
$$n = \left(\frac{3}{2}m^2 + \frac{1}{2}m\right) + 2k^2$$
where $m$ is an integer and $k$ is a nonnegative integer.  
\end{theorem}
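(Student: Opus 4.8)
The plan is to reduce the generating function $\hat{P}(q)$ of Proposition~\ref{genfn_spchar_hatS} to an explicit product of two theta-like series modulo $2$, and then simply read off the coefficient of $q^n$. First I would record two simplifications of the factors in $\hat{P}(q)$. The quotient $\frac{(q^2;q^2)_\infty}{(q;q)_\infty}$ telescopes, since $\frac{1-q^{2n}}{1-q^n}=1+q^n$, to $\prod_{n\ge1}(1+q^n)$, the generating function for partitions into distinct parts. For the bracketed factor, applying Jacobi's Triple Product Identity (Lemma~\ref{JacobiTPI}) with $z=-1$ gives the classical evaluation $\frac{(q;q)_\infty^2}{(q^2;q^2)_\infty}=\sum_{n=-\infty}^{\infty}(-1)^n q^{n^2}$; replacing $q$ by $q^2$ then yields $\frac{(q^2;q^2)_\infty^2}{(q^4;q^4)_\infty}=\sum_{n=-\infty}^{\infty}(-1)^n q^{2n^2}=1+2\sum_{n\ge1}(-1)^n q^{2n^2}$. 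Substituting these into the formula, the constant $\tfrac32$ combines with the leading $1$ of the theta series so that the half-integer coefficients cancel, leaving the integral product $\hat{P}(q)=\prod_{n\ge1}(1+q^n)\cdot\bigl(1-\sum_{n\ge1}(-1)^n q^{2n^2}\bigr)$.

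Next I would reduce each factor modulo $2$. Because $1+q^n\equiv 1-q^n\pmod 2$, the first product satisfies $\prod_{n\ge1}(1+q^n)\equiv\prod_{n\ge1}(1-q^n)=(q;q)_\infty\pmod 2$, and Euler's Pentagonal Number Theorem (Corollary~\ref{EulerPNT}) converts this to $\prod_{n\ge1}(1+q^n)\equiv\sum_{m=-\infty}^{\infty}q^{(3m^2+m)/2}\pmod 2$, where the reindexing $m\mapsto -m$ rewrites the generalized pentagonal exponent $\tfrac{m(3m-1)}{2}$ in the form $\tfrac{3}{2}m^2+\tfrac12 m$. For the second factor, every sign collapses since $-(-1)^n\equiv 1\pmod 2$, and absorbing the leading $1$ gives $1-\sum_{n\ge1}(-1)^n q^{2n^2}\equiv\sum_{k\ge0}q^{2k^2}\pmod 2$. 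Multiplying, the coefficient of $q^n$ in $\bigl(\sum_{m\in\mathbb{Z}}q^{(3m^2+m)/2}\bigr)\bigl(\sum_{k\ge0}q^{2k^2}\bigr)$ is precisely the number of pairs $(m,k)$ with $m\in\mathbb{Z}$, $k\ge 0$, and $n=\tfrac32 m^2+\tfrac12 m+2k^2$, which is exactly $R(n)$; reducing modulo $2$ yields $f_{\hat S}(n)\equiv R(n)\pmod 2$.

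The individual steps are routine once the setup is fixed; the part that requires care is the power-series bookkeeping modulo $2$. Specifically, I would want to be explicit that $\prod(1+q^n)$ reduces to $(q;q)_\infty$ and that the $\tfrac12$'s genuinely disappear to leave an integer series before any reduction is attempted, and I would want to pin down the reindexing of the generalized pentagonal numbers so that the exponent is presented as $\tfrac32 m^2+\tfrac12 m$ rather than $\tfrac32 m^2-\tfrac12 m$, exactly matching the statement of $R(n)$. These are the only places where an error could creep in; everything else is forced by the two product-to-sum identities and the two elementary congruences $1+q^n\equiv 1-q^n$ and $-1\equiv 1$ modulo $2$.
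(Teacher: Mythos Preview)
Your proof is correct and follows essentially the same route as the paper's: rewrite $\frac{(q^2;q^2)_\infty}{(q;q)_\infty}$ as $(-q;q)_\infty=\prod_{n\ge1}(1+q^n)$, evaluate the bracketed factor via Jacobi's Triple Product Identity to obtain $\sum_{k\ge0}(-1)^kq^{2k^2}$ after the $\tfrac32$ and $\tfrac12$ cancel, reduce both factors modulo~$2$, and then apply Euler's Pentagonal Number Theorem before comparing coefficients. The only differences from the paper are cosmetic (writing $\prod(1+q^n)$ rather than $(-q;q)_\infty$, and deriving the theta evaluation for $q^2$ by first doing it for $q$ and substituting).
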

\begin{proof}
We begin with  the generating function result from Proposition \ref{genfn_spchar_hatS}.  Note that 
\allowdisplaybreaks
\begin{eqnarray*}
\sum_{n=0}^\infty f_{\hat{S}}(n)q^n 
&=& 
\frac{(q^2;q^2)_\infty}{(q;q)_\infty}\left(\frac{3}{2}-\frac{1}{2}\frac{(q^2;q^2)_\infty^2}{(q^4;q^4)_\infty}\right)\\
&=& 
(-q;q)_\infty\left(\frac{3}{2}-\frac{1}{2}\frac{(q^2;q^2)_\infty}{(-q^2;q^2)_\infty}\right) \\
&=& 
(-q;q)_\infty\left(\frac{3}{2}-\frac{1}{2}\sum_{k=-\infty}^\infty (-1)^kq^{2k^2}\right) \\
&&\qquad \qquad \text{using Jacobi's Triple Product Identity} \\
&=& 
(-q;q)_\infty\left(1-\sum_{k=1}^\infty (-1)^kq^{2k^2}\right) \\
&=& 
(-q;q)_\infty\left(\sum_{k=0}^\infty (-1)^kq^{2k^2}\right) \\
&\equiv & 
(q;q)_\infty\left(\sum_{k=0}^\infty q^{2k^2}\right) \pmod{2} \\
&\equiv & 
\sum_{m=-\infty}^\infty q^{m(3m+1)/2} \left(\sum_{k=0}^\infty q^{2k^2}\right) \pmod{2}.\\
&&\qquad \qquad \text{by Euler's Pentagonal Number Theorem} 
\end{eqnarray*}
The result follows by comparing the coefficient of $q^n$ on both sides of this congruence.  
\end{proof}
\begin{example} Let $n$=7. Then only $(2,0)$ and $(-2,1)$ satisfy the definition of $R(7)$. By Example \ref{first_ex}, $|\hat{\mathcal P}^+(n)|=2$ and $|\hat{\mathcal P}^-(n)|=2$. Since each element of ${\mathcal P}^-(n)$ labels two non-self-associate spin characters of $\hat{S}(7)$, we have $f_{\hat{S}}(7)=6.$ Hence $f_{\hat{S}}(7)\equiv|R(7)|\equiv 0\pmod{2}.$
\end{example}
Theorem \ref{spchar_hatS_mod2} can be used to write down infinitely many Ramanujan--like congruences modulo 2 (similar to the congruences mentioned above in Theorem \ref{ramanujan_congs}).   
\begin{corollary}
\label{Smod2_cor}
Let $p$ be a prime congruent to 5 or 11 modulo 24, and let $r$ be an integer satisfying $1\leq r\leq p-1.$  Then, for all $n\geq 0,$ 
$$
f_{\hat{S}}\left(p^2n+pr+\frac{p^2-1}{24}\right) \equiv  0 \pmod{2}.
$$
\end{corollary}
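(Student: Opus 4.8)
The plan is to reduce the congruence to a nonexistence statement about representations by a binary quadratic form, and then kill all such representations with a quadratic-residue obstruction. By Theorem \ref{spchar_hatS_mod2} it suffices to control $R(N)$ modulo $2$ for $N = p^2 n + pr + \frac{p^2-1}{24}$; in fact I would prove the stronger assertion that $R(N) = 0$, i.e. that this $N$ admits no representation of the form $N = \frac{3}{2}m^2 + \frac12 m + 2k^2$ whatsoever, which certainly forces $R(N)\equiv 0 \pmod 2$.

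First I would clear denominators to linearize the bookkeeping. Multiplying a representation $N = \frac{m(3m+1)}{2} + 2k^2$ by $24$ and adding $1$ completes the pentagonal term to a square: $24N + 1 = (6m+1)^2 + 48k^2 = (6m+1)^2 + 3(4k)^2$. Thus every pair $(m,k)$ counted by $R(N)$ produces a representation of $M := 24N+1$ by the form $x^2 + 3y^2$, with $x = 6m+1$ and $y = 4k$. Consequently it is enough to show that $M$ is not represented by $x^2 + 3y^2$ over the integers at all, since dropping the congruence constraints on $x,y$ only enlarges the solution set.

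Next I would pin down the $p$-adic valuation of $M$. Since $p$ is coprime to $24$ we have $p^2 \equiv 1 \pmod{24}$, so $\frac{p^2-1}{24}$ is an integer and $M = p^2(24n+1) + 24pr = p\bigl(p(24n+1) + 24r\bigr)$. Because $1 \le r \le p-1$ and $\gcd(p,24)=1$, the inner factor is $\equiv 24r \not\equiv 0 \pmod p$, so $\nu_p(M) = 1$ exactly. This is precisely where the hypothesis on the range of $r$ enters, and getting the valuation to be exactly $1$ (rather than at least $1$) is the step that needs care.

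Finally I would invoke the obstruction. The primes $p \equiv 5$ and $p \equiv 11 \pmod{24}$ all satisfy $p \equiv 2 \pmod 3$, hence $\left(\frac{-3}{p}\right) = -1$, so the form $x^2 + 3y^2$ has no nontrivial zero modulo $p$: if $p \mid x^2 + 3y^2$ then $x^2 \equiv -3y^2 \pmod p$ forces $p \mid y$ (otherwise $-3$ would be a square mod $p$) and therefore $p \mid x$, whence $p^2 \mid x^2 + 3y^2$. Applying this to $M = (6m+1)^2 + 3(4k)^2$, any representation would give $p^2 \mid M$, contradicting $\nu_p(M)=1$. Hence $M$ has no representation by $x^2+3y^2$, so $R(N)=0$ and $f_{\hat{S}}(N)\equiv 0 \pmod 2$. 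The only genuinely substantive ingredient is the evaluation $\left(\frac{-3}{p}\right)=-1$ for $p\equiv 2\pmod 3$ together with this anisotropy argument; the remainder is the valuation computation above.
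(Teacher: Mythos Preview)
Your proof is correct and follows essentially the same route as the paper: reduce via Theorem~\ref{spchar_hatS_mod2} to showing $R(N)=0$, rewrite $24N+1=(6m+1)^2+3(4k)^2$, compute $\nu_p(24N+1)=1$, and derive a contradiction from $\left(\frac{-3}{p}\right)=-1$. You spell out the anisotropy step (that $p\mid x^2+3y^2$ forces $p^2\mid x^2+3y^2$) and the deduction $p\equiv 2\pmod 3$ a bit more explicitly than the paper, but the argument is the same.
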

\begin{proof}
Note that $p^2n+pr+\frac{p^2-1}{24} $ can be represented as 
$$p^2n+pr+\frac{p^2-1}{24}  = \left(\frac{3}{2}m^2 + \frac{1}{2}m\right) + 2k^2$$
if and only if 
$24\left(p^2n+pr+\frac{p^2-1}{24} \right) + 1$ can be represented as 
$$24\left(p^2n+pr+\frac{p^2-1}{24}\right) + 1 = (6m+1)^2 + 3(4k)^2.$$
Since $p$ is a prime congruent to 5 or 11 modulo 24, we know that $\left(\frac{-3}{p}\right) = -1$ where $\left(\frac{a}{p}\right)$ is the Legendre symbol.  Thus, if 
$$
24\left(p^2n+pr+\frac{p^2-1}{24} \right) + 1
$$
can be written as $(6m+1)^2 + 3(4k)^2,$ then it must be the case that 
$$\nu_p\left(24\left(p^2n+pr+\frac{p^2-1}{24} \right) + 1\right)$$ is even (where $\nu_p(n)$ is the exponent of $p$ dividing $n.$)  However, 
\begin{eqnarray*}
24\left(p^2n+pr+\frac{p^2-1}{24} \right) + 1
&=& 
24p^2n + 24pr + p^2 \\
&=& 
p(24pn + 24r + p).
\end{eqnarray*}
This quantity is clearly divisible by $p$ and  {\bf not} divisible by $p^2.$  Therefore, 
$$\nu_p\left(24\left(p^2n+pr+\frac{p^2-1}{24} \right) + 1\right)$$ is actually odd, and this means that 
$24\left(p^2n+pr+\frac{p^2-1}{24} \right) + 1$ {\bf cannot} be represented as 
$$24\left(p^2n+pr+\frac{p^2-1}{24} \right) + 1 = (6m+1)^2 + 3(4k)^2.$$
Therefore, by Theorem \ref{spchar_hatS_mod2}, the result follows.  
\end{proof}
Thus, for example, Corollary \ref{Smod2_cor} yields the following when $p=5:$  
\begin{eqnarray*}
f_{\hat{S}}(25n+6) &\equiv & 0 \pmod{2}, \\
f_{\hat{S}}(25n+11) &\equiv & 0 \pmod{2}, \\
f_{\hat{S}}(25n+16) &\equiv & 0 \pmod{2}, \text{\ \ and} \\
f_{\hat{S}}(25n+21) &\equiv & 0 \pmod{2}.
\end{eqnarray*}
We note that a very similar argument to the one used above was recently used by Chen, Hirschhorn, and Sellers \cite[Corollary 4.6]{CHS}.

We next turn our attention to a consideration of $f_{\hat{S}}(n)$ modulo 3 and a characterization based solely on $n.$  

\begin{theorem}
\label{spchar_hatS_mod3}
For all $n\geq 1,$ 
$$f_{\hat{S}}(n) \equiv 
\begin{cases} 
1 \pmod{3} &\mbox{if } n = \frac{k(3k+1)}{2}, k\equiv 0, 3 \pmod{4}, \\ 
2 \pmod{3} & \mbox{if } n  = \frac{k(3k+1)}{2}, k\equiv 1, 2 \pmod{4}, \\ 
0 \pmod{3} & \mbox{otherwise}.
\end{cases} 
$$
\end{theorem}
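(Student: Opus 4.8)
The plan is to reduce the generating function of Proposition \ref{genfn_spchar_hatS} modulo $3$ to a single signed theta series supported on the generalized pentagonal numbers, and then simply read off coefficients. First I would clear the rational coefficients $\frac32,\frac12$. Using $\frac{(q^2;q^2)_\infty}{(q;q)_\infty}=(-q;q)_\infty$ together with the Jacobi identity $\frac{(q^2;q^2)_\infty^2}{(q^4;q^4)_\infty}=\sum_{k=-\infty}^\infty(-1)^kq^{2k^2}$ already invoked in the proof of Theorem \ref{spchar_hatS_mod2}, Proposition \ref{genfn_spchar_hatS} becomes the $\mathbb{Z}[[q]]$-identity
$$2\sum_{n\ge0}f_{\hat S}(n)q^n=3\,(-q;q)_\infty-(-q;q)_\infty\sum_{k=-\infty}^\infty(-1)^kq^{2k^2}.$$
Reducing modulo $3$ kills the first term, and since $2^{-1}\equiv2\pmod3$ I would obtain
$$\sum_{n\ge0}f_{\hat S}(n)q^n\equiv(-q;q)_\infty\,\varphi(-q^2)\pmod3,\qquad \varphi(-q^2):=\sum_{k=-\infty}^\infty(-1)^kq^{2k^2},$$
whose right-hand side is the eta-quotient $\dfrac{(q^2;q^2)_\infty^3}{(q;q)_\infty(q^4;q^4)_\infty}$.

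The heart of the argument is the \emph{exact} identity
$$(-q;q)_\infty\,\varphi(-q^2)=\sum_{k=-\infty}^\infty(-1)^{k(k+1)/2}q^{k(3k+1)/2},$$
which I would prove from Jacobi's Triple Product Identity (Lemma \ref{JacobiTPI}). The essential subtlety is that the period-$4$ sign $(-1)^{k(k+1)/2}$ is not geometric in $k$, so no single-variable substitution into Lemma \ref{JacobiTPI} produces it; instead I would recognize the right-hand side as the Ramanujan theta value $f(-q^2,q)=\sum_k(-q^2)^{k(k+1)/2}q^{k(k-1)/2}$ and use the two-variable form $f(a,b)=(-a;ab)_\infty(-b;ab)_\infty(ab;ab)_\infty$, which follows from Lemma \ref{JacobiTPI} upon taking $z=(a/b)^{1/2}$ and replacing the base by $(ab)^{1/2}$. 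With $a=-q^2$, $b=q$ this gives $f(-q^2,q)=(q^2;-q^3)_\infty(-q;-q^3)_\infty(-q^3;-q^3)_\infty$. Splitting each Pochhammer symbol according to the parity of its exponent (so the base $-q^3$ becomes $q^6$) and regrouping by residues modulo $6$, the factors of the form $(1-\cdot)$ collect to the even exponents, giving $(q^2;q^2)_\infty$, and the factors of the form $(1+\cdot)$ collect to the odd exponents, giving $(-q;q^2)_\infty$. Finally $(-q;q^2)_\infty=\frac{(q^2;q^2)_\infty^2}{(q;q)_\infty(q^4;q^4)_\infty}$ identifies $f(-q^2,q)$ with the eta-quotient above, proving the identity.

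With the identity established, the theorem is immediate. The map $k\mapsto k(3k+1)/2$ is a bijection from $\mathbb{Z}$ onto the generalized pentagonal numbers, so the signed series has coefficient $(-1)^{k(k+1)/2}$ at $q^{k(3k+1)/2}$ and $0$ at all other exponents; comparing coefficients of $q^n$ yields $f_{\hat S}(n)\equiv(-1)^{k(k+1)/2}\pmod3$ when $n=k(3k+1)/2$ for the unique such $k$, and $f_{\hat S}(n)\equiv0\pmod3$ otherwise. The concluding step is the elementary parity count for triangular numbers: $k(k+1)/2$ is even exactly when $k\equiv0,3\pmod4$ and odd exactly when $k\equiv1,2\pmod4$, so the surviving coefficient is $+1\equiv1$ in the first case and $-1\equiv2$ in the second, which is precisely the claimed trichotomy.

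I expect the main obstacle to be the exact theta identity. The temptation is to force the pentagonal series out of the one-variable Jacobi product, which fails on the period-$4$ sign; the correct route is the two-variable Ramanujan theta, after which the real difficulty is purely bookkeeping: splitting the three Pochhammer symbols with base $-q^3$ and tracking signs so that the even-exponent part collapses to $(q^2;q^2)_\infty$ and the odd-exponent part to $(-q;q^2)_\infty$. Everything downstream of that identity is routine.
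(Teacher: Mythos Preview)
Your proof is correct. Both you and the paper arrive at the same intermediate identity
$$\frac{(q^2;q^2)_\infty^3}{(q;q)_\infty(q^4;q^4)_\infty}=\sum_{k=-\infty}^\infty(-1)^{k(k+1)/2}q^{k(3k+1)/2},$$
but by genuinely different routes. The paper specializes the Quintuple Product Identity (Lemma~\ref{QPI}) at $s=-q$, $t=-1$: after simplifying both sides, this yields the identity directly, with the sign written as $(-1)^{(3k^2+7k)/2}$ (which equals your $(-1)^{k(k+1)/2}$, since their difference $k(k+3)$ is always even). You instead recognize the sum side as the Ramanujan theta value $f(-q^2,q)$ and obtain the product form purely from Jacobi's Triple Product, at the cost of the parity splitting of the three base-$(-q^3)$ Pochhammer symbols. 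Your route is arguably more elementary, since it never invokes the Quintuple Product Identity (itself typically proved from the Triple Product); the paper's route is cleaner once the Quintuple Product is on the table, bypassing the residue-mod-$6$ bookkeeping entirely. The final coefficient comparison and the mod-$4$ analysis of the sign are identical in both arguments.
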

\begin{proof}
As in the proof of Theorem \ref{spchar_hatS_mod2}, we begin with  the generating function result from Proposition \ref{genfn_spchar_hatS}.  Note that 
\begin{eqnarray*}
2\sum_{n=0}^\infty f_{\hat{S}}(n)q^n 
&=& 
\frac{(q^2;q^2)_\infty}{(q;q)_\infty}\left({3}-\frac{(q^2;q^2)_\infty^2}{(q^4;q^4)_\infty}\right)\\
&\equiv & 
\frac{(q^2;q^2)_\infty}{(q;q)_\infty}\left(2\frac{(q^2;q^2)_\infty^2}{(q^4;q^4)_\infty}\right) \pmod{3}\\
&=& 
2\frac{(q^2;q^2)_\infty^3}{(q;q)_\infty (q^4;q^4)_\infty}.  
\end{eqnarray*}
We now wish to rewrite this last product as a sum, and in order to do so we utilize the Quintuple Product Identity from Lemma \ref{QPI}.  Performing the substitutions $s=-q$ and $t=-1$ in Lemma \ref{QPI}, the product side becomes 
\allowdisplaybreaks 
\begin{eqnarray*}
&&
\prod_{n\geq 1} (1-(-q)^n)(1-(-q)^n (-1))(1-(-q)^{n-1}(-1)^{-1})(1-(-q)^{2n-1}(-1)^2)(1-(-q)^{2n-1}(-1)^{-2}) \\
&=& 
\prod_{n\geq 1} (1-(-q)^n)(1+(-q)^n)(1+(-q)^{n-1})(1+q^{2n-1})^2 \\
&=& 
2\prod_{n\geq 1} (1-(-q)^n)(1+(-q)^n)^2(1+q^{2n-1})^2 \\
&=& 
2\prod_{n\geq 1} (1-q^{2n})(1+q^{2n-1})(1+q^{2n})^2(1-q^{2n-1})^2(1+q^{2n-1})^2 \\
&=& 
2\prod_{n\geq 1} (1-q^{n})(1-q^{4n-2})(1+q^{n})^2 \\
&=& 
2\prod_{n\geq 1} (1-q^{n})\frac{(1-q^{2n})}{(1-q^{4n})}\frac{(1-q^{2n})^2}{(1-q^{n})^2} \\
&=& 
2\prod_{n\geq 1} \frac{(1-q^{2n})^3}{(1-q^{4n})(1-q^{n})}\\
&=& 
2\frac{(q^2;q^2)_\infty^3}{(q;q)_\infty(q^4;q^4)_\infty}.
\end{eqnarray*}
Note that this was the product we obtained above which is congruent, modulo 3, to the generating function for $2f_{\hat{S}}(n).$  Thus, we now know that the generating function for $2f_{\hat{S}}(n)$ is congruent modulo 3 to the sum side obtained from the Quintuple Product Identity when we make the substitutions $s=-q$ and $t=-1.$  In this case, the sum side becomes 
\begin{eqnarray*}
&&
\sum_{n= -\infty}^\infty (-q)^{(3n^2+n)/2}((-1)^{3n} - (-1)^{-3n-1}) \\
&=& 
\sum_{n= -\infty}^\infty q^{(3n^2+n)/2}(-1)^{(3n^2+n)/2}((-1)^{3n} - (-1)^{3n+1}) \\
&=& 
\sum_{n= -\infty}^\infty q^{(3n^2+n)/2}\left((-1)^{(3n^2+n)/2 + 3n}- (-1)^{(3n^2+n)/2 + 3n+1}   \right)  \\
&=& 
\sum_{n= -\infty}^\infty q^{(3n^2+n)/2}\left((-1)^{(3n^2+n)/2 + 3n}+ (-1)^{(3n^2+n)/2 + 3n}   \right)  \\
&=& 
2\sum_{n= -\infty}^\infty q^{(3n^2+n)/2}\left((-1)^{(3n^2+n)/2 + 3n}  \right)  \\
&=& 
2\sum_{n= -\infty}^\infty q^{(3n^2+n)/2}\left((-1)^{(3n^2+7n)/2 }  \right) .
\end{eqnarray*}
The result now follows by comparing the coefficients of like powers of $q$ in the above work and doing a bit of analysis on the function $(-1)^{(3n^2+7n)/2 }. $

\end{proof}
Theorem \ref{spchar_hatS_mod3} provides a powerful tool for proving infinitely many Ramanujan--like congruences modulo 3 which are satisfied by  $f_{\hat{S}}(n).$

\begin{corollary}
\label{spchar_hatS_mod3_infinitefamily}
Let $p\geq 5$ be prime and choose $r,$ $1\leq r\leq p-1,$ such that $24r+1$ is a quadratic nonresidue modulo $p.$  Then, for all $n\geq 0,$ 
$$
f_{\hat{S}}(pn+r)\equiv 0\pmod{3}.  
$$
\end{corollary}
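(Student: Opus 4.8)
The plan is to leverage the clean characterization of $f_{\hat{S}}(n) \pmod 3$ furnished by Theorem~\ref{spchar_hatS_mod3}. That theorem tells us that $f_{\hat{S}}(n) \not\equiv 0 \pmod 3$ precisely when $n$ is a generalized pentagonal number, i.e.\ when $n = k(3k+1)/2$ for some integer $k$. So to prove that $f_{\hat{S}}(pn+r) \equiv 0 \pmod 3$ for \emph{all} $n \geq 0$, it suffices to show that no integer of the form $pn+r$ can ever be a generalized pentagonal number, under the stated hypothesis on $r$. I would state this reduction explicitly as the first step: by Theorem~\ref{spchar_hatS_mod3}, it is enough to prove that the equation $pn+r = k(3k+1)/2$ has no solution in integers $k$ for any $n \geq 0$.

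The second step is the standard ``complete the square'' trick that converts the pentagonal condition into a quadratic residue condition, exactly mirroring the manipulation used in the proof of Corollary~\ref{Smod2_cor}. Starting from $pn+r = \tfrac{k(3k+1)}{2}$, I would multiply by $24$ and add $1$ to obtain
\begin{equation*}
24(pn+r) + 1 = (6k+1)^2.
\end{equation*}
Thus $pn+r$ is a generalized pentagonal number if and only if $24(pn+r)+1$ is a perfect square. In particular, if such a $k$ exists then $24(pn+r)+1 \equiv (6k+1)^2 \pmod p$, so $24(pn+r)+1$ must be a quadratic residue modulo $p$ (or $\equiv 0$).

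The third step reduces this modulo $p$. Since $p \geq 5$ we have $\gcd(24,p)=1$, and modulo $p$ we compute
\begin{equation*}
24(pn+r) + 1 \equiv 24r + 1 \pmod p.
\end{equation*}
By hypothesis $24r+1$ is a quadratic nonresidue modulo $p$; I would also want to observe that $24r+1 \not\equiv 0 \pmod p$, which follows since a nonzero quadratic nonresidue is by definition nonzero, so the value $24r+1$ is genuinely a nonresidue and not the excluded case of $0$. Hence $24(pn+r)+1$, being congruent to a quadratic nonresidue mod $p$, cannot be a perfect square (a perfect square is always a residue or $\equiv 0 \pmod p$). This contradicts the requirement that $24(pn+r)+1 = (6k+1)^2$, so no integer $k$ exists, $pn+r$ is never a generalized pentagonal number, and the result follows from Theorem~\ref{spchar_hatS_mod3}.

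The argument is short and the only place needing a little care is the logical direction of the quadratic-residue step: I must be sure to use that a perfect square reduces to a residue modulo $p$, and that the hypothesis genuinely forbids $24r+1$ from being a residue. The main (very mild) obstacle is simply bookkeeping the ``complete the square'' arithmetic correctly and confirming the $\gcd(24,p)=1$ condition that makes reduction mod $p$ legitimate; there is no deep analytic content beyond invoking the characterization already established. I would remark that, just as Corollary~\ref{Smod2_cor} produces explicit families, one can list the admissible residues $r$ for each small prime $p$ by tabulating the quadratic nonresidues of $p$ and solving $24r+1 \equiv (\text{nonresidue}) \pmod p$.
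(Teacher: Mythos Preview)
Your proposal is correct and follows essentially the same route as the paper: invoke Theorem~\ref{spchar_hatS_mod3} to reduce to showing $pn+r$ is never a generalized pentagonal number, complete the square via $24(pn+r)+1=(6k+1)^2$, and reduce modulo $p$ to contradict the quadratic-nonresidue hypothesis on $24r+1$. The only difference is that you spell out a couple of minor sanity checks ($\gcd(24,p)=1$ and $24r+1\not\equiv 0$) that the paper leaves implicit.
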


\begin{proof}
Given a prime $p$ and a value of $r$ as described in the corollary, we must ask whether $pn+r$ can be represented as a pentagonal number $k(3k+1)/2$ for some $k.$  Note that 
\begin{eqnarray*}
&&
pn+r = \frac{k(3k+1)}{2} \\
&\iff &
24(pn+r)+1 = 36k^2+12k+1 \\
&\iff &
24pn+24r+1 = (6k+1)^2.
\end{eqnarray*}
Modulo $p,$ this means $24r+1$ must be equivalent to a square.  But this cannot be since $r$ was chosen such that $24r+1$ is a quadratic nonresidue modulo $p.$  Thus, $pn+r$ cannot be represented as $k(3k+1)/2$ for some $k.$  Therefore, by Theorem \ref{spchar_hatS_mod3}, 
$$
f_{\hat{S}}(pn+r)\equiv 0\pmod{3}.  
$$
\end{proof}
We note that, for each prime $p\geq 5,$ the above corollary provides $(p-1)/2$ different Ramanujan--like congruences modulo 3 which are satisfied by $f_{\hat{S}}.$    
\vs

\subsection{Spin characters of $\hat{A}(n)$}
\vs
The duality between the generating functions in Propositions \ref{genfn_spchar_hatS} and \ref{genfn_spchar_hatA} results in a characterization of $f_{\hat{A}}(n)$ modulo 2 which is almost identical to that of $f_{\hat{S}}(n).$

\begin{theorem}
\label{spchar_hatA_mod2}
For all $n\geq 1,$ 
$$f_{\hat{A}}(n) \equiv R(n) \pmod{2}$$
where $R(n)$ is the number of ways to represent $n$ as 
$$n = \left(\frac{3}{2}m^2 + \frac{1}{2}m\right) + 2k^2$$
where $m$ is an integer and $k$ is a positive integer.  
\end{theorem}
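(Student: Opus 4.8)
The plan is to mirror the proof of Theorem~\ref{spchar_hatS_mod2} almost verbatim, beginning from the generating function in Proposition~\ref{genfn_spchar_hatA} and tracking the single sign change with care. First I would rewrite the infinite products using the identity $(-q;q)_\infty = (q^2;q^2)_\infty/(q;q)_\infty$ together with its image under $q\mapsto q^2$, which gives
\[
\sum_{n=0}^\infty f_{\hat A}(n)q^n = (-q;q)_\infty\left(\frac{3}{2} + \frac{1}{2}\frac{(q^2;q^2)_\infty}{(-q^2;q^2)_\infty}\right).
\]
I would then apply Jacobi's Triple Product Identity (Lemma~\ref{JacobiTPI}) in the form $(q^2;q^2)_\infty/(-q^2;q^2)_\infty = \sum_{k=-\infty}^\infty (-1)^k q^{2k^2}$ to convert the bracketed factor into a theta series.

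The decisive bookkeeping step, and the only place where the $\hat A$ computation diverges from the $\hat S$ one, is the handling of the constant term. Splitting off the $k=0$ term and using the symmetry $k\mapsto -k$, the bracket becomes
\[
\frac{3}{2} + \frac{1}{2}\left(1 + 2\sum_{k=1}^\infty (-1)^k q^{2k^2}\right) = 2 + \sum_{k=1}^\infty (-1)^k q^{2k^2}.
\]
In Theorem~\ref{spchar_hatS_mod2} the minus sign produces $\sum_{k=0}^\infty (-1)^k q^{2k^2}$, which retains the $k=0$ contribution; here the plus sign instead yields a free constant $2$, and this constant vanishes modulo $2$. This is precisely what forces $k$ to range over the \emph{positive} integers in $R(n)$ rather than the nonnegative ones, accounting for the sole difference between this statement and Theorem~\ref{spchar_hatS_mod2}.

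Reducing modulo $2$ and using $(-q;q)_\infty \equiv (q;q)_\infty \pmod 2$, I would obtain
\[
\sum_{n=0}^\infty f_{\hat A}(n)q^n \equiv (q;q)_\infty\left(\sum_{k=1}^\infty q^{2k^2}\right) \pmod 2.
\]
Finally I would invoke Euler's Pentagonal Number Theorem (Corollary~\ref{EulerPNT}) to write $(q;q)_\infty \equiv \sum_{m=-\infty}^\infty q^{m(3m+1)/2} \pmod 2$, and compare coefficients of $q^n$. Since $m(3m+1)/2 = \tfrac{3}{2}m^2 + \tfrac{1}{2}m$, the coefficient of $q^n$ on the right counts exactly the representations $n = \left(\tfrac{3}{2}m^2 + \tfrac{1}{2}m\right) + 2k^2$ with $m\in\mathbb{Z}$ and $k\geq 1$, which is $R(n)$. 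I expect no genuine obstacle here: the whole argument is a sign-tracking variant of the previous theorem, and the only point demanding attention is verifying that the plus sign kills the $k=0$ term modulo $2$, thereby replacing ``nonnegative'' by ``positive'' in the definition of $R(n)$.
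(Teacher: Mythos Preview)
Your proposal is correct and follows precisely the approach the paper indicates: the paper's own proof is omitted with the remark that it is ``almost identical to that of Theorem~\ref{spchar_hatS_mod2},'' and you have carried out exactly that parallel computation, correctly identifying that the sign change turns $\tfrac{3}{2}-\tfrac{1}{2}(1+2\sum_{k\geq 1}\cdots)$ into $\tfrac{3}{2}+\tfrac{1}{2}(1+2\sum_{k\geq 1}\cdots)=2+\sum_{k\geq 1}\cdots$, so that the constant $2$ disappears modulo $2$ and forces $k\geq 1$.
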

\begin{remark}
It is important to highlight that, while this result looks extremely similar to Theorem \ref{spchar_hatS_mod2}, there is a subtle difference.  In Theorem \ref{spchar_hatS_mod2}, $k=0$ is allowed while in Theorem \ref{spchar_hatA_mod2}, $k$ is required to be positive.  
\end{remark}
\begin{proof}
The proof of this result is almost identical to that of Theorem \ref{spchar_hatS_mod2} and is, therefore, omitted here.  
\end{proof}
As with Theorem \ref{spchar_hatS_mod2}, we can write down an infinite family of Ramanujan--like congruences modulo 2 satisfied by $f_{\hat{A}}.$  Such a corollary looks almost identical to Corollary \ref{Smod2_cor}.
\begin{corollary}
\label{Amod2_cor}
Let $p$ be a prime congruent to 5 or 11 modulo 24, and let $r$ be an integer satisfying $1\leq r\leq p-1.$  Then, for all $n\geq 0,$ 
$$
f_{\hat{A}}\left(p^2n+pr+\frac{p^2-1}{24}\right) \equiv  0 \pmod{2}.
$$
\end{corollary}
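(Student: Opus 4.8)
The plan is to deduce this exactly as Corollary \ref{Smod2_cor} was deduced from Theorem \ref{spchar_hatS_mod2}. By Theorem \ref{spchar_hatA_mod2}, it suffices to show that for $N := p^2 n + pr + \frac{p^2-1}{24}$ the count $R(N)$ of representations
\[
N = \left(\frac{3}{2}m^2 + \frac{1}{2}m\right) + 2k^2, \qquad m \in \mathbb{Z}, \ k \geq 1,
\]
is even; I will in fact show $R(N)=0$. First I would complete the square as before: multiplying such a representation by $24$ and adding $1$ converts it into
\[
24N + 1 = (6m+1)^2 + 3(4k)^2,
\]
so $R(N)$ is bounded above by the number of representations of $24N+1$ by the form $a^2 + 3b^2$.

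The arithmetic heart is then identical to Corollary \ref{Smod2_cor}. Since $p \equiv 5$ or $11 \pmod{24}$ we have $\left(\frac{-3}{p}\right) = -1$, which forces $p$ to divide any value $a^2 + 3b^2$ only to an even power; equivalently, if $24N+1 = a^2 + 3b^2$ then $\nu_p(24N+1)$ is even. But
\[
24N + 1 = 24p^2 n + 24 pr + p^2 = p\,(24pn + 24r + p),
\]
and the cofactor satisfies $24pn + 24r + p \equiv 24r \not\equiv 0 \pmod p$, using $p \geq 5$ and $1 \leq r \leq p-1$. Hence $\nu_p(24N+1)$ is odd, contradicting the previous sentence, so $24N+1$ admits no representation by $a^2 + 3b^2$ at all, and therefore $R(N)=0$.

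The only place where I would take genuine care — and the point I regard as the real (if mild) obstacle — is the subtlety flagged in the Remark: here $k$ is required to be positive, whereas in Theorem \ref{spchar_hatS_mod2} the value $k=0$ is allowed. This changes nothing for the present argument, because the valuation computation rules out \emph{every} representation of $24N+1$ by $a^2+3b^2$, in particular those arising from $k \geq 1$; it even excludes $k=0$, since $24N+1$ cannot be a perfect square ($\nu_p$ odd). Thus the restriction on $k$ can only shrink the already-vanishing count, so $R(N)=0$ in the $\hat{A}$ setting as well, and the congruence follows from Theorem \ref{spchar_hatA_mod2}. Beyond verifying this edge case, I expect the proof to be a verbatim transcription of Corollary \ref{Smod2_cor}.
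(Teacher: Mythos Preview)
Your proposal is correct and matches the paper's intent exactly: the paper does not supply a separate proof for this corollary, since it follows verbatim from the argument of Corollary~\ref{Smod2_cor} via Theorem~\ref{spchar_hatA_mod2}. Your added remark handling the $k\geq 1$ versus $k\geq 0$ discrepancy is apt and disposes of the only point not literally covered by the earlier proof.
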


We can also easily write down an analogue of Theorem \ref{spchar_hatS_mod3} for the function $f_{\hat{A}}(n).$  
\begin{theorem}
\label{spchar_hatA_mod3}
For all $n\geq 1,$ 
$$f_{\hat{A}}(n) \equiv 
\begin{cases} 
2 \pmod{3} &\mbox{if } n = k(3k+1)/2, k\equiv 0, 3 \pmod{4}, \\ 
1 \pmod{3} & \mbox{if } n  = k(3k+1)/2, k\equiv 1, 2 \pmod{4}, \\ 
0 \pmod{3}  & \mbox{otherwise}.
\end{cases} 
$$
\end{theorem}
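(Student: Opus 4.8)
The plan is to run the proof of Theorem \ref{spchar_hatS_mod3} essentially verbatim, exploiting that the generating functions in Propositions \ref{genfn_spchar_hatS} and \ref{genfn_spchar_hatA} differ only in the sign of the term $\tfrac12 (q^2;q^2)_\infty^2/(q^4;q^4)_\infty$. First I would start from Proposition \ref{genfn_spchar_hatA}, multiply through by $2$, and reduce modulo $3$. Since $3\equiv 0\pmod 3$, the constant $3$ obtained from $2\cdot\tfrac32$ drops out, and I expect to be left with
\[
2\sum_{n=0}^\infty f_{\hat A}(n)q^n \equiv \frac{(q^2;q^2)_\infty^3}{(q;q)_\infty (q^4;q^4)_\infty} \pmod 3.
\]
The contrast with the $\hat S$ computation is exactly the point to watch: there the $-$ sign produced a leading coefficient $-1\equiv 2$ on the product, whereas here the $+$ sign yields coefficient $+1$.

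Next I would invoke the product-to-sum calculation already carried out inside the proof of Theorem \ref{spchar_hatS_mod3}. Substituting $s=-q$ and $t=-1$ into the Quintuple Product Identity (Lemma \ref{QPI}) gives, with no new work required,
\[
\frac{(q^2;q^2)_\infty^3}{(q;q)_\infty (q^4;q^4)_\infty} = \sum_{n=-\infty}^\infty q^{(3n^2+n)/2}(-1)^{(3n^2+7n)/2}.
\]
This is precisely the identity derived en route to Theorem \ref{spchar_hatS_mod3}, merely stripped of the spurious factor of $2$ that appeared there on both sides.

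Combining the two displays gives $2\sum f_{\hat A}(n)q^n \equiv \sum_n q^{(3n^2+n)/2}(-1)^{(3n^2+7n)/2}\pmod 3$. Multiplying by $2^{-1}\equiv 2\pmod 3$ then yields
\[
\sum_{n=0}^\infty f_{\hat A}(n)q^n \equiv 2\sum_{n=-\infty}^\infty q^{(3n^2+n)/2}(-1)^{(3n^2+7n)/2} \pmod 3,
\]
so that $f_{\hat A}(n)\equiv 2f_{\hat S}(n)\pmod 3$ for every $n$. Finally I would compare coefficients of $q^n$: the exponent $(3n^2+n)/2$ ranges over the generalized pentagonal numbers, so $f_{\hat A}(n)\equiv 0$ unless $n=k(3k+1)/2$, and the sign analysis of $(-1)^{(3n^2+7n)/2}$ — identical to that already performed for Theorem \ref{spchar_hatS_mod3} — splits the nonzero values according to $k\bmod 4$. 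The extra factor of $2$ doubles each residue, so the values $1$ and $2$ simply interchange relative to the $\hat S$ table, producing exactly the asserted cases.

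I do not anticipate a genuine obstacle, since every nontrivial computation is inherited from Theorem \ref{spchar_hatS_mod3}. The single point demanding care is the bookkeeping of the factor $2$: it enters once when clearing the $\tfrac12$ from the generating function and once when inverting $2$ modulo $3$, and it is precisely the net effect of these two steps — a single multiplication by $2$ modulo $3$ — that swaps the residues $1$ and $2$ and thereby distinguishes the $\hat A$ table from the $\hat S$ table.
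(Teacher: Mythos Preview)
Your proposal is correct and follows essentially the same route as the paper: both reduce $2\sum f_{\hat A}(n)q^n$ modulo $3$ to the product $\dfrac{(q^2;q^2)_\infty^3}{(q;q)_\infty(q^4;q^4)_\infty}$, recognize this from the proof of Theorem~\ref{spchar_hatS_mod3} as congruent to $2\sum f_{\hat S}(n)q^n$, and conclude $f_{\hat A}(n)\equiv 2f_{\hat S}(n)\pmod 3$, whence the swapped residues. The only difference is presentational---you spell out the Quintuple Product step and the inversion of $2$ explicitly, whereas the paper simply cites the earlier proof.
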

\begin{proof}
As noted in Proposition \ref{genfn_spchar_hatA}, the generating function for $f_{\hat{A}}(n)$ is given by 
\[
\sum_{n=0}^\infty f_{\hat{A}}(n)q^n =
\hat{\hat{P}}(q)=\frac{(q^2;q^2)_\infty}{(q;q)_\infty}\left(\frac{3}{2}+\frac{1}{2}\frac{(q^2;q^2)_\infty^2}{(q^4;q^4)_\infty}\right).
\]
Thus, 
\begin{eqnarray*}
2\sum_{n=0}^\infty f_{\hat{A}}(n)q^n 
&=& 
\frac{(q^2;q^2)_\infty}{(q;q)_\infty}\left({3}+\frac{(q^2;q^2)_\infty^2}{(q^4;q^4)_\infty}\right)\\
&\equiv & 
\frac{(q^2;q^2)_\infty}{(q;q)_\infty}\left(\frac{(q^2;q^2)_\infty^2}{(q^4;q^4)_\infty}\right) \pmod{3}\\
&=& 
\frac{(q^2;q^2)_\infty^3}{(q;q)_\infty (q^4;q^4)_\infty}.  
\end{eqnarray*}
Therefore, 
\begin{eqnarray*}
\sum_{n=0}^\infty f_{\hat{A}}(n)q^n 
& \equiv & 
2\frac{(q^2;q^2)_\infty^3}{(q;q)_\infty (q^4;q^4)_\infty}\pmod{3} \\
&\equiv & 
2\sum_{n=0}^\infty f_{\hat{S}}(n)q^n \pmod{3}
\end{eqnarray*}
thanks to the proof of Theorem \ref{spchar_hatS_mod3}.  The result then follows from Theorem \ref{spchar_hatS_mod3}.
\end{proof}
Because of the extremely similar look of the results in Theorems \ref{spchar_hatS_mod3} and \ref{spchar_hatA_mod3}, it is the case that a corollary to Theorem  \ref{spchar_hatA_mod3} can be written down immediately which closely resembles Corollary \ref{spchar_hatS_mod3_infinitefamily}.

\begin{corollary}
\label{spchar_hatA_mod3_infinitefamily}
Let $p\geq 5$ be prime and choose $r,$ $1\leq r\leq p-1,$ such that $24r+1$ is a quadratic nonresidue modulo $p.$  Then, for all $n\geq 0,$ 
$$
f_{\hat{A}}(pn+r)\equiv 0\pmod{3}.  
$$
\end{corollary}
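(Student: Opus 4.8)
The cleanest plan is to exploit the congruence $f_{\hat{A}}(n)\equiv 2f_{\hat{S}}(n)\pmod 3$ that was already established en route to Theorem \ref{spchar_hatA_mod3}; there the generating functions were shown to satisfy $\sum_n f_{\hat{A}}(n)q^n \equiv 2\sum_n f_{\hat{S}}(n)q^n \pmod 3$. Since $2$ is a unit modulo $3$, this congruence gives $f_{\hat{A}}(n)\equiv 0\pmod 3$ if and only if $f_{\hat{S}}(n)\equiv 0\pmod 3$. Consequently the present corollary would follow verbatim from Corollary \ref{spchar_hatS_mod3_infinitefamily}: under the same hypotheses on $p$ and $r$, we already know $f_{\hat{S}}(pn+r)\equiv 0\pmod 3$, and multiplying by the unit $2$ preserves the vanishing.

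Should one prefer a self-contained argument, I would instead reason directly from Theorem \ref{spchar_hatA_mod3}, which asserts $f_{\hat{A}}(n)\equiv 0\pmod 3$ precisely when $n$ is not a generalized pentagonal number $k(3k+1)/2$. The task then reduces to showing that no integer of the form $pn+r$ is pentagonal. First I would complete the square via the standard identity: $pn+r = k(3k+1)/2$ holds for some integer $k$ if and only if $24(pn+r)+1 = (6k+1)^2$. Reducing this equation modulo $p$ forces $24r+1$ to be a quadratic residue modulo $p$, contradicting the hypothesis that it is a nonresidue. Hence $pn+r$ is never pentagonal, and the result then follows from the ``otherwise'' branch of Theorem \ref{spchar_hatA_mod3}.

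I anticipate no genuine obstacle here: either route is a short transcription of machinery already in place. The only subtlety worth flagging is that, although Theorems \ref{spchar_hatS_mod3} and \ref{spchar_hatA_mod3} interchange the residues $1$ and $2$ on the pentagonal numbers, their vanishing sets coincide exactly, so the quadratic-residue obstruction lying at the heart of Corollary \ref{spchar_hatS_mod3_infinitefamily} carries over without change. For that reason I would expect the authors to present the proof by simply remarking on the parallel with Corollary \ref{spchar_hatS_mod3_infinitefamily}, as they did for the modulo-2 analogue in Corollary \ref{Amod2_cor}.
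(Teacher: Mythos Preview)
Your proposal is correct and matches the paper's approach: the authors simply state the corollary without a proof environment, remarking that it ``can be written down immediately'' from Theorem \ref{spchar_hatA_mod3} in view of its close resemblance to Corollary \ref{spchar_hatS_mod3_infinitefamily}. Both of your routes---reducing to Corollary \ref{spchar_hatS_mod3_infinitefamily} via $f_{\hat{A}}(n)\equiv 2f_{\hat{S}}(n)\pmod 3$, or rerunning the pentagonal-number argument directly from Theorem \ref{spchar_hatA_mod3}---are valid realizations of that remark, and your anticipation of the authors' presentation is accurate.
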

\vs
\section{Arithmetic Results on $p$-bar core partitions}
\subsection{Ramanujan--like congruences for $\bar{p}$-core partitions}
Let $p$ be an odd prime. Using elementary generating function manipulations, we can easily prove a number of parity results for $f_{\bar{p}}(n),$ the number of $\bar{p}$-core partitions of $n.$ 
\begin{theorem}
\label{pbar-cores-parity}
Let $p\geq 5$ be prime and let $r$, $1\leq r\leq p-1,$ such that $24r+1$ is a quadratic nonresidue modulo $p.$  Then, for all $n\geq 0,$ 
$$
f_{\bar{p}}(pn+r) \equiv 0\pmod{2}.
$$
\end{theorem}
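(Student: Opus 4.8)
The plan is to reduce the generating function $F_{\bar p}(q)$ of Proposition~\ref{genfn_spchar_barp} modulo $2$ to a clean product and then run the same pentagonal-number/quadratic-nonresidue dissection used in Corollary~\ref{spchar_hatS_mod3_infinitefamily}. The only mod-$2$ fact I need is that squaring a Pochhammer product doubles its exponents: since $(1-q^{an})^2 \equiv 1 - q^{2an} \pmod 2$, one has $(q^a;q^a)_\infty^2 \equiv (q^{2a};q^{2a})_\infty \pmod 2$ for every $a \geq 1$.

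First I would apply this identity to the two ``even'' factors in $F_{\bar p}$. Writing $(q^2;q^2)_\infty \equiv (q;q)_\infty^2$ and $(q^{2p};q^{2p})_\infty \equiv (q^p;q^p)_\infty^2$ modulo $2$ and substituting into
\[
F_{\bar p}(q) = \frac{(q^2;q^2)_\infty\,(q^p;q^p)_\infty^{(p+1)/2}}{(q;q)_\infty\,(q^{2p};q^{2p})_\infty},
\]
everything collapses to
\[
F_{\bar p}(q) \equiv \frac{(q;q)_\infty^2\,(q^p;q^p)_\infty^{(p+1)/2}}{(q;q)_\infty\,(q^p;q^p)_\infty^{2}} = (q;q)_\infty\,(q^p;q^p)_\infty^{(p-3)/2} \pmod 2.
\]
Here $(p-3)/2$ is a nonnegative integer because $p$ is odd (and it is $\geq 1$ since $p \geq 5$), and these manipulations are legitimate in $\mathbb{F}_2[[q]]$ because $(q;q)_\infty$ has constant term $1$ and is therefore invertible. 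The point of this form is that the second factor $(q^p;q^p)_\infty^{(p-3)/2}$ is a power series in $q^p$ alone, so every monomial it contributes has exponent divisible by $p$.

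Next I would extract the coefficient of $q^{pn+r}$. By Euler's Pentagonal Number Theorem (Corollary~\ref{EulerPNT}) we may write $(q;q)_\infty = \sum_k (-1)^k q^{k(3k+1)/2}$, so a monomial $q^{pn+r}$ can arise from $(q;q)_\infty\,(q^p;q^p)_\infty^{(p-3)/2}$ only if $\tfrac{k(3k+1)}{2} \equiv r \pmod p$ for some integer $k$. Multiplying by $24$ and completing the square turns this into $(6k+1)^2 \equiv 24r+1 \pmod p$, and since $6$ is invertible modulo $p$ this would force $24r+1$ to be a square modulo $p$. But $24r+1$ is assumed to be a quadratic nonresidue (in particular nonzero) modulo $p$, so no such $k$ exists. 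Hence the coefficient of $q^{pn+r}$ on the right-hand side vanishes, giving $f_{\bar p}(pn+r) \equiv 0 \pmod 2$.

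I do not expect a genuine obstacle here: the mod-$2$ squaring identity does all the structural work, and the arithmetic step is identical in spirit to the nonresidue argument already used for $f_{\hat S}$ and $f_{\hat A}$. The only points requiring care are (i) justifying the formal power-series division modulo $2$, and (ii) noting that the nonresidue hypothesis also rules out $24r+1 \equiv 0 \pmod p$, both of which are immediate.
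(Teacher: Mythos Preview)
Your proposal is correct and follows essentially the same route as the paper: reduce $F_{\bar p}(q)$ modulo $2$ via $(q^a;q^a)_\infty^2\equiv(q^{2a};q^{2a})_\infty$ to obtain $(q;q)_\infty\,(q^p;q^p)_\infty^{(p-3)/2}$, discard the $q^p$-factor, and then use Euler's Pentagonal Number Theorem together with the quadratic-nonresidue condition on $24r+1$ exactly as in Corollary~\ref{spchar_hatS_mod3_infinitefamily}. Your added remarks on invertibility in $\mathbb{F}_2[[q]]$ and on $24r+1\not\equiv 0\pmod p$ are nice points of rigor but do not change the argument.
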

\begin{proof}
Note that 
\begin{eqnarray*}
\sum_{n=0}^\infty f_{\bar{p}}(n)q^n
&=& 
\frac{(q^2;q^2)_\infty(q^p;q^p)_\infty^{\frac{p+1}{2}}}{(q;q)_\infty(q^{2p};q^{2p})_\infty} \\
&\equiv &
\frac{(q;q)_\infty^2(q^p;q^p)_\infty^{\frac{p+1}{2}}}{(q;q)_\infty(q^{p};q^{p})_\infty^2} \pmod{2} \\
&=& 
(q;q)_\infty(q^p;q^p)_\infty^{\frac{p+1}{2}-2} \\
&=& 
(q;q)_\infty(q^p;q^p)_\infty^{\frac{p-3}{2}} \\
&\equiv & 
\sum_{m=-\infty}^\infty q^{m(3m+1)/2} (q^p;q^p)_\infty^{\frac{p-3}{2}} \pmod{2}.
\end{eqnarray*}
Note that $  (q^p;q^p)_\infty^{\frac{p-3}{2}} $ is a function of $q^p,$ and since we are only concerned about arithmetic progressions of the form $pn+r$ where $1\leq r\leq p-1,$ it is the case that we can ignore the factor of 
$  (q^p;q^p)_\infty^{\frac{p-3}{2}} $ as we move forward.  

Now we simply need to ask:  Can $pn+r$ ever be written as $m(3m+1)/2$ for some integer $m$?  Note that 
$$ 
pn+r = \frac{m(3m+1)}{2} 
\iff 
24(pn+r) +1 = (6m+1)^2.
$$
Thus, $24r+1\equiv (6m+1)^2 \pmod{p}.$  However, $r$ has been chosen such that $24r+1$ is a quadratic nonresidue modulo $p,$ so we know that it cannot be congruent to a square modulo $p.$  Therefore, $pn+r$ cannot be written as $m(3m+1)/2$ for any integer $m,$ and this means that $f_{\bar{p}}(pn+r)\equiv 0 \pmod{2}$ based on the generating function manipulations above.  
\end{proof}
The following example illustrates Theorem \ref{pbar-cores-parity}.
\begin{example} \label{yes} Let $p=7$, $n=4,$ and $r=3.$  Then $pn+r=31$ is quadratic nonresidue mod 7. Let ${\mathbb F}_{\bar 7}(31)$ be the set of $\bar{7}$-cores of 31 so that $|{\mathbb F}_{\bar 7}(31)|=f_{\bar{7}}(31)$. Since
\[
{\mathbb F}_{\bar 7}(31)=\{(16,9,5,2),(12,10,5,3,1),(17,10,3,1),(16,9,3,2,1)\},
\]
we have $f_{\bar{7}}(31)\equiv 0 \pmod{2}$.  See Figures 2,3,4 and 5 for the corresponding 7-abaci arrangements. 
\end{example}
\begin{figure}[h]
\begin{center}
{\rotatebox{0}{\resizebox*{5cm}{!}{\includegraphics{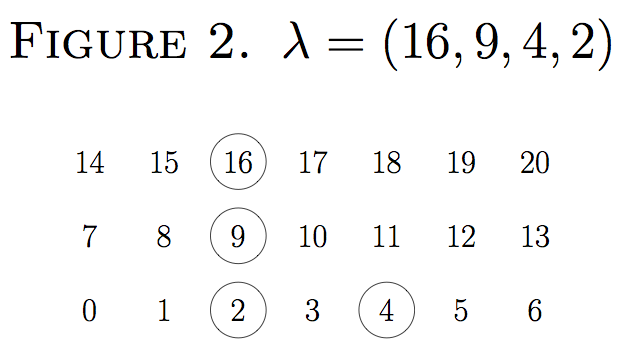}}\hglue5mm}}
{\rotatebox{0}{\resizebox*{5.5cm}{!}{\includegraphics{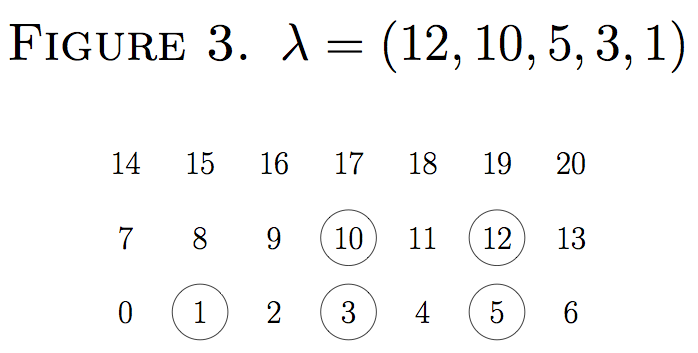}}\hglue5mm}}
\label{figs1+2}
\end{center}
\end{figure}

\begin{figure}[h]
\begin{center}
{\rotatebox{0}{\resizebox*{5.2cm}{!}{\includegraphics{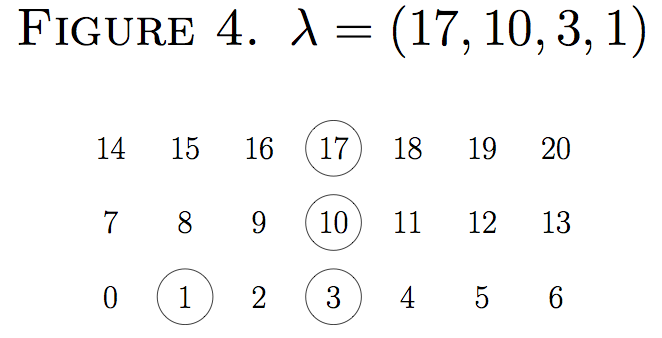}}\hglue5mm}}
{\rotatebox{0}{\resizebox*{5.5cm}{!}{\includegraphics{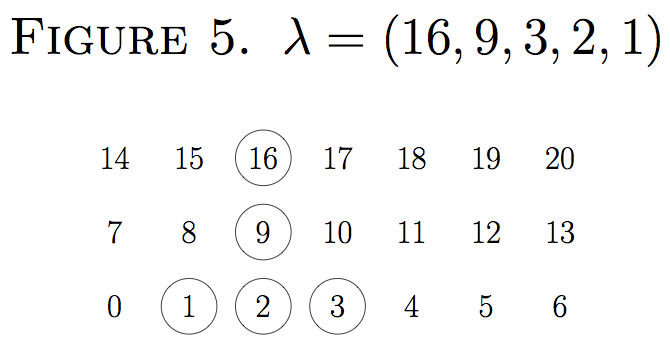}}\hglue5mm}}
\label{figs1+2}
\end{center}
\end{figure}
We note that our results in this section are slightly strengthened by a theorem of I. Kiming \cite{K2}, that is, $f_{\bar{p}}(n)>0$ for $n\in {\mathbf N}$ when $p\geq 7$ is an odd prime. 
\subsection{$p$-defect-zero spin characters of $\hat{S}_n$}
Using Proposition \ref{genfn_pdefectzero_S}, we can now prove that $f^{0}_{\hat{S},p}$ satisfies infinitely many congruences modulo 3.  
\begin{theorem}
\label{pdefectzero_S_mod3}
Let $p\geq 5$ be prime and let $r,$ $1\leq r\leq p-1,$ be chosen such that $24r+1$ is a quadratic nonresidue modulo $p.$  Then, for all $n\geq 0,$ 
$$
f^{0}_{\hat{S},p}(pn+r)\equiv 0\pmod{3}.
$$
\end{theorem}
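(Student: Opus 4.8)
The plan is to imitate the strategy of Theorem~\ref{pbar-cores-parity}, now working modulo $3$ and with the \emph{signed} $\bar p$-core generating function. Abbreviate
\[
H(q) := P(-q)\,P(-q^p)^{1-p} = F^+_{\bar p}(q) - F^-_{\bar p}(q),
\]
the difference read off from the identity $F^{\pm}_{\bar p}(q)=\tfrac12\bigl(F_{\bar p}(q)\pm P(-q)P(-q^p)^{1-p}\bigr)$ of Section~2.4 (with parameter equal to $p$). Combining this with Proposition~\ref{genfn_pdefectzero_S} gives the exact identity
\[
2\sum_{n\ge 0} f^{0}_{\hat S,p}(n)\,q^n = 2\hat F_{\bar p}(q) = 3F_{\bar p}(q) - H(q).
\]
Since $F_{\bar p}(q)$ has integer coefficients, reducing modulo $3$ removes the first term and leaves $2f^{0}_{\hat S,p}(n)\equiv -[q^n]H(q)\pmod 3$ for every $n$. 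Because $\gcd(2,3)=1$, it therefore suffices to prove that the coefficient $[q^{pn+r}]H(q)$ vanishes.

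The structural fact I would establish is the factorization $H(q) = (-q;-q)_\infty\,T(q^p)$ for some power series $T$ in $q^p$. I would obtain this from the bar-core/bar-quotient decomposition underlying the Section~2.4 identity: the total signed count $\sum_\lambda(-1)^{|\lambda|-l(\lambda)}q^{|\lambda|}$ over all bar partitions equals $(-q;-q)_\infty$, and since each bar partition splits uniquely into a $\bar p$-core and a $\bar p$-quotient whose size is a multiple of $p$, with the sign multiplicative across the two pieces, the signed core count $H(q)$ is $(-q;-q)_\infty$ times the reciprocal of the signed quotient generating function, the latter being a series in $q^p$. I would then recall the computation from the proof of Theorem~\ref{spchar_hatS_mod3}, where the Quintuple Product Identity (Lemma~\ref{QPI}) was used to show
\[
(-q;-q)_\infty = \frac{(q^2;q^2)_\infty^3}{(q;q)_\infty(q^4;q^4)_\infty} = \sum_{k=-\infty}^{\infty} (-1)^{(3k^2+7k)/2}\,q^{k(3k+1)/2},
\]
so that $(-q;-q)_\infty$ is supported precisely on the generalized pentagonal numbers $k(3k+1)/2$.

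The endgame then matches Theorem~\ref{pbar-cores-parity} verbatim. Because $T(q^p)$ is a series in $q^p$, the coefficient $[q^{pn+r}]H(q)$ is a $\mathbb Z$-linear combination of coefficients $[q^N](-q;-q)_\infty$ with $N\equiv r\pmod p$. A nonnegative integer $N$ is generalized pentagonal exactly when $24N+1=(6k+1)^2$ is a perfect square. If $N\equiv r\pmod p$ then $24N+1\equiv 24r+1\pmod p$, and since $24r+1$ is by hypothesis a quadratic nonresidue (in particular nonzero) modulo $p$, the integer $24N+1$ is a nonzero nonresidue and hence not a perfect square. Thus no such $N$ is generalized pentagonal, each coefficient in the combination vanishes, and $[q^{pn+r}]H(q)=0$. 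With the reduction above this yields $f^{0}_{\hat S,p}(pn+r)\equiv 0\pmod 3$.

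I expect the genuine obstacle to be the factorization $H(q)=(-q;-q)_\infty\,T(q^p)$, and specifically the correct identification of the non-$q^p$ part as the pentagonal theta series $(-q;-q)_\infty$ itself rather than its reciprocal. This is the same product-versus-reciprocal subtlety that the quintuple-product computation in the proof of Theorem~\ref{spchar_hatS_mod3} settles; once it is combined with the multiplicativity of the sign $(-1)^{|\lambda|-l(\lambda)}$ under the bar-quotient map, the remaining coefficient extraction is routine.
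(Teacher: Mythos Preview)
Your approach is essentially the paper's, with two cosmetic differences in execution. First, where you propose to \emph{derive} the factorization $H(q)=(-q;-q)_\infty\,T(q^p)$ from sign-multiplicativity under the bar-core/quotient map, the paper simply cites Olsson's explicit closed form $F^+_{\bar p}(q)-F^-_{\bar p}(q)=(-q;-q)_\infty(-q^p;-q^p)_\infty^{t-1}$ (the reference to ``Olsson~[O3] just prior to his Proposition~9.10''); your $T(q^p)$ is exactly $(-q^p;-q^p)_\infty^{t-1}$, and the sign-multiplicativity you invoke is precisely what Olsson proves to obtain that formula. So the ``genuine obstacle'' you flag is already packaged in the cited result, and the product-versus-reciprocal ambiguity inherited from the typo in Proposition~2.5 is resolved there rather than via your combinatorial argument. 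Second, you route the pentagonal support of $(-q;-q)_\infty$ through the Quintuple Product computation of Theorem~\ref{spchar_hatS_mod3}; the paper takes the shorter path of observing that $(-q;-q)_\infty$ is just Euler's Pentagonal Number Theorem (Corollary~\ref{EulerPNT}) with $q\mapsto -q$, which already makes the support visible. After those two steps, your endgame (the quadratic-nonresidue argument on $24N+1$) is identical to the paper's.
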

\begin{proof}
Thanks to Proposition \ref{genfn_pdefectzero_S} as well as the work completed by Olsson \cite{O3} just prior to his Proposition 9.10, we know the following:  
\begin{eqnarray*}
\sum_{n=0}^\infty f^{0}_{\hat{S},p}(n)q^n 
&=&
F^+_{\bar{p}}(q) + 2F^-_{\bar{p}}(q) \\
&\equiv &
F^+_{\bar{p}}(q) - F^-_{\bar{p}}(q) \pmod{3}\\
&=& 
(-q;-q)_\infty(-q^p;-q^p)_\infty^{t-1}.
\end{eqnarray*}
Given that we are only considering arithmetic progressions of the form $pn+r$ with $1\leq r\leq p-1,$ we can ignore the factor $(-q^p;-q^p)_\infty^{t-1}$ since it is a function of $q^p.$  Thus, we simply focus our attention on $(-q;-q)_\infty.$  

Next, note that $(-q;-q)_\infty$ is the product side of Euler's Pentagonal Number Theorem with $q$ replaced by $-q.$  Thus, we simply need to ask whether it is possible to write $pn+r$ as a pentagonal number.  The proof then follows exactly as the proof of Corollary \ref{spchar_hatS_mod3_infinitefamily} above.  
\end{proof}
\begin{example} Let $n$=31 and $p$=7. By Example \ref{yes}, there are two $\bar{7}$-cores of 31 with positive sign: $\lambda=(16,9,4,2)$ and $\lambda=(17,10,3,1)$; each label a self-associate 7-defect zero spin character of $\hat{S}(31)$. There are two $\bar{7}$-cores of $31$ with negative sign; each correspond to a pair of non-self-associate 7-defect zero spin characters of $\hat{S}(31)$. Hence there are six 7-defect zero spin characters of $\hat{S}(31)$ and $f^0_{\hat{S}}(31)\equiv 0\pmod{3}.$
\end{example}
\begin{corollary} Let $p\geq 5$ be prime and choose $r$, $1\leq r\leq p-1$, such that $24r+1$ is a quadratic nonresidue modulo $p$. Let $f^+_{\hat{S},p}(n)$ be the number of positive $p$-defect spin characters of $\hat{S}(n).$ Then, for all $n\geq 0$, 
\[
f^+_{\hat{S},p}(pn+r)\equiv 0\pmod{3}.
\] 
\end{corollary}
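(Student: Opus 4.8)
The plan is to imitate the proof of Theorem \ref{pdefectzero_S_mod3} almost verbatim, the only genuinely new ingredient being the bookkeeping that isolates the positive characters. First I would record the generating function for $f^+_{\hat S,p}(n)$ in Olsson's sign variables: by Proposition \ref{genfn_pdefectzero_S} together with the defining formula for $F^{\pm}_{\bar p}$, the positive $p$-defect count is a $\mathbb Z$-linear combination $\alpha F^+_{\bar p}(q)+\beta F^-_{\bar p}(q)$, which I would re-expand as $\tfrac{\alpha+\beta}{2}\,F_{\bar p}(q)+\tfrac{\alpha-\beta}{2}\bigl(F^+_{\bar p}(q)-F^-_{\bar p}(q)\bigr)$. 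The crucial structural point, stemming from the fact that a positive-sign core contributes one self-associate character while a negative-sign core contributes a pair, is that the weights satisfy $\alpha+\beta\equiv 0\pmod 3$, exactly as in $f^0_{\hat S,p}=f^+_{\bar p}+2f^-_{\bar p}$.

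Granting this, the reduction modulo $3$ is immediate: the symmetric term $\tfrac{\alpha+\beta}{2}F_{\bar p}(q)$ vanishes, and the generating function for $f^+_{\hat S,p}(n)$ becomes congruent mod $3$ to a scalar multiple of the signed series $F^+_{\bar p}(q)-F^-_{\bar p}(q)$. From the computation carried out in the proof of Theorem \ref{pdefectzero_S_mod3}, this series equals $(-q;-q)_\infty(-q^p;-q^p)_\infty^{p-1}$.

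I would then rerun the pentagonal-number obstruction precisely as in Corollary \ref{spchar_hatS_mod3_infinitefamily}. The factor $(-q^p;-q^p)_\infty^{p-1}$ is a power series in $q^p$, so on the progression $pn+r$ with $1\le r\le p-1$ it can be discarded and attention restricted to $(-q;-q)_\infty$, which by Corollary \ref{EulerPNT} with $q\mapsto -q$ is supported on the exponents $g=m(3m-1)/2$. A nonzero coefficient of $q^{pn+r}$ would require $g\equiv r\pmod p$ for some such $g$; since $24g+1=(6m-1)^2$, this forces $24r+1$ to be a square modulo $p$, contradicting the hypothesis that $24r+1$ is a quadratic nonresidue. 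Hence the coefficient of $q^{pn+r}$ is $0$, and $f^+_{\hat S,p}(pn+r)\equiv 0\pmod 3$.

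The main obstacle lies entirely in the first step: making precise the generating function for $f^+_{\hat S,p}(n)$ and verifying the divisibility $\alpha+\beta\equiv 0\pmod 3$ dictated by the self-associate/pair structure of the spin characters. A convenient way to sidestep the explicit combination is to first observe, from the vanishing of $(-q;-q)_\infty(-q^p;-q^p)_\infty^{p-1}$ on the progression established above, that $f^+_{\bar p}(pn+r)=f^-_{\bar p}(pn+r)$ there; the positive and paired contributions then balance and force the positive count to be an exact multiple of $3$. Once either form of this structural identity is secured, the pentagonal-number computation is routine and identical to the ones already used for $f^0_{\hat S,p}$ and in Theorem \ref{pbar-cores-parity}.
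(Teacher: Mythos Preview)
Your proposal rests on a misreading of the quantity being counted. In the corollary, $f^+_{\hat S,p}(n)$ is the number of spin characters of $\hat S(n)$ whose $p$-defect is \emph{positive}, i.e.\ strictly greater than zero; it is not a sign-refined count of defect-zero characters. Consequently there is no expression of its generating function as $\alpha F^+_{\bar p}(q)+\beta F^-_{\bar p}(q)$: the series $F^\pm_{\bar p}$ enumerate $\bar p$-core partitions, which label precisely the $p$-defect \emph{zero} spin characters, so no $\mathbb Z$-linear combination of them can produce the positive-defect count. Your ``convenient sidestep'' via $f^+_{\bar p}(pn+r)=f^-_{\bar p}(pn+r)$ is a perfectly valid observation, but what it yields is $f^0_{\hat S,p}(pn+r)=3f^+_{\bar p}(pn+r)\equiv 0\pmod 3$, which is Theorem~\ref{pdefectzero_S_mod3} again, not the corollary.

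The paper's argument is a one-line subtraction: every spin character has either $p$-defect zero or positive $p$-defect, so
\[
f^+_{\hat S,p}(n)=f_{\hat S}(n)-f^0_{\hat S,p}(n).
\]
For $n=pn'+r$ with $24r+1$ a nonresidue mod $p$, Corollary~\ref{spchar_hatS_mod3_infinitefamily} gives $f_{\hat S}(pn'+r)\equiv 0\pmod 3$ and Theorem~\ref{pdefectzero_S_mod3} gives $f^0_{\hat S,p}(pn'+r)\equiv 0\pmod 3$; subtracting finishes the proof. The pentagonal-number obstruction you describe is exactly the mechanism behind both of those inputs, so your analytic instincts are right, but they need to be applied to the pair $(f_{\hat S},\,f^0_{\hat S,p})$ rather than to the sign-split $(F^+_{\bar p},F^-_{\bar p})$.
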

\begin{proof} The result follows from Corollary \ref{spchar_hatS_mod3_infinitefamily} and Theorem \ref{pdefectzero_S_mod3}.
\end{proof}
\subsection{$p$-defect zero spin characters of $\hat{A}(n)$}
Using Proposition \ref{genfn_pdefectzero_A}, we can now prove that $f^{0}_{\hat{A},p}$ satisfies infinitely many congruences modulo 3.  (This result looks extremely similar to Theorem \ref{pdefectzero_S_mod3} above.)  
\begin{theorem}
\label{pdefectzero_A_mod3}
Let $p\geq 5$ be prime and let $r,$ $1\leq r\leq p-1,$ be chosen such that $24r+1$ is a quadratic nonresidue modulo $p.$  Then, for all $n\geq 0,$ 
$$
f^{0}_{\hat{A},p}(pn+r)\equiv 0\pmod{3}.
$$
\end{theorem}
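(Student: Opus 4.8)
The plan is to reduce this to Theorem \ref{pdefectzero_S_mod3}, which has already settled the identical arithmetic question for $\hat{S}(n)$. The key observation is that the generating functions for the $p$-defect zero spin characters of $\hat{A}(n)$ and of $\hat{S}(n)$ are, modulo $3$, negatives of one another, so no new analytic work is needed. Concretely, I would invoke Proposition \ref{genfn_pdefectzero_A} together with Proposition \ref{genfn_pdefectzero_S} and add the two generating functions:
\[
\hat{\hat{F}}_{\bar{p}}(q) + \hat{F}_{\bar{p}}(q) = \bigl(2F^+_{\bar{p}}(q) + F^-_{\bar{p}}(q)\bigr) + \bigl(F^+_{\bar{p}}(q) + 2F^-_{\bar{p}}(q)\bigr) = 3\bigl(F^+_{\bar{p}}(q) + F^-_{\bar{p}}(q)\bigr).
\]
Since the right-hand side is divisible by $3$, comparing coefficients of $q^n$ yields $f^0_{\hat{A},p}(n) \equiv -f^0_{\hat{S},p}(n) \pmod{3}$ for every $n\geq 0$. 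Because $-1$ is a unit modulo $3$, the vanishing of $f^0_{\hat{S},p}$ on an arithmetic progression is equivalent to the vanishing of $f^0_{\hat{A},p}$ on that same progression.

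With this in hand I would simply apply Theorem \ref{pdefectzero_S_mod3}: under the stated hypotheses that $p\geq 5$ is prime and $24r+1$ is a quadratic nonresidue modulo $p$, that theorem gives $f^0_{\hat{S},p}(pn+r) \equiv 0 \pmod{3}$, and hence $f^0_{\hat{A},p}(pn+r) \equiv -f^0_{\hat{S},p}(pn+r) \equiv 0 \pmod{3}$ for all $n\geq 0$. Alternatively, one can argue directly, mirroring the proof of Theorem \ref{pdefectzero_S_mod3} step by step: reduce $\hat{\hat{F}}_{\bar{p}}(q) = 2F^+_{\bar{p}}(q) + F^-_{\bar{p}}(q) \equiv -\bigl(F^+_{\bar{p}}(q) - F^-_{\bar{p}}(q)\bigr) \pmod{3}$, recall from that proof that $F^+_{\bar{p}}(q) - F^-_{\bar{p}}(q) = (-q;-q)_\infty(-q^p;-q^p)_\infty^{t-1}$, discard the factor $(-q^p;-q^p)_\infty^{t-1}$ since it is a function of $q^p$ and therefore irrelevant for residues $r$ with $1\leq r\leq p-1$, and then invoke Euler's Pentagonal Number Theorem applied to $(-q;-q)_\infty$ together with the congruence-free identity $24(pn+r)+1 = (6m+1)^2$ to see that $pn+r$ can never be pentagonal.

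I expect there to be essentially no analytic obstacle, since all of the product-to-sum manipulation was already carried out in the $\hat{S}$ case. The only point that requires a moment's care is the appearance of the coefficient $2$ rather than $1$ in front of $F^+_{\bar{p}}$; one must note that this constant is a unit modulo $3$, so it does not affect which coefficients vanish. The cleanest way to package this observation is precisely the identity $\hat{\hat{F}}_{\bar{p}}(q) + \hat{F}_{\bar{p}}(q) = 3(F^+_{\bar{p}}(q) + F^-_{\bar{p}}(q))$, which makes the relation $f^0_{\hat{A},p}(n) \equiv -f^0_{\hat{S},p}(n) \pmod{3}$ transparent and reduces the entire statement to the already-proved $\hat{S}$ analogue.
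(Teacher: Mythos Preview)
Your proposal is correct and takes essentially the same approach as the paper: both establish that the generating function for $f^{0}_{\hat{A},p}$ is congruent modulo $3$ to the negative of the generating function for $f^{0}_{\hat{S},p}$, and then invoke Theorem~\ref{pdefectzero_S_mod3}. The paper reaches $-\,(F^+_{\bar p}(q)-F^-_{\bar p}(q))$ by reducing the coefficient $2\equiv -1\pmod 3$ directly, whereas you obtain the same relation by summing the two generating functions to $3(F^+_{\bar p}+F^-_{\bar p})$; these are equivalent one-line manipulations.
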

\begin{proof}
Thanks to Proposition \ref{genfn_pdefectzero_A} as well as the work completed above, we know the following:  
\begin{eqnarray*}
\sum_{n=0}^\infty f^{0}_{\hat{A},p}(n)q^n 
&=&
2F^+_{\bar{p}}(q) + F^-_{\bar{p}}(q) \\
&\equiv &
-F^+_{\bar{p}}(q) + F^-_{\bar{p}}(q)  \pmod{3} \\
&=& 
-(F^+_{\bar{p}}(q) - F^-_{\bar{p}}(q)) \\
&\equiv & 
- \sum_{n=0}^\infty f^{0}_{\hat{S},p}(n)q^n \pmod{3}.  
\end{eqnarray*}
This proof then follows from the proof of Theorem \ref{pdefectzero_S_mod3}. 
\end{proof}
\begin{example} Let $n=31$ and $p=7.$ We use Example \ref{yes}. Each of two self-associate spin characters of $\hat{S}(31)$ split into two distinct and conjugate 7-defect zero spin characters of $\hat{A}(31)$.   Each pair of non-self-associate spin characters of $\hat{S}(31)$ restrict to a single spin character of $\hat{A}(31).$ Hence there are six 7-defect zero spin characters of $\hat{A}(31)$ and $f^0_{\hat{A}}(31)\equiv 0 \pmod{3}.$
\end{example}
\begin{corollary} Let $p\geq 5$ be prime and choose $r$, $1\leq r\leq p-1$, such that $24r+1$ is a quadratic nonresidue modulo $p$. Let $f^+_{\hat{A},p}(n)$ be the number of positive $p$-defect spin characters of $\hat{A}(n).$ Then, for all $n\geq 0$
\[
f^+_{\hat{A},p}(pn+r)\equiv 0\pmod{3}.
\] 
\end{corollary}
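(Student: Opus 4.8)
The plan is to realize the quantity $f^+_{\hat{A},p}(n)$ as a difference of two counts that have already been controlled modulo $3$. Every irreducible spin character of $\hat{A}(n)$ carries a well-defined nonnegative $p$-defect $\bar{d}_p$, and exactly one of $\bar{d}_p=0$ or $\bar{d}_p>0$ holds. The characters with $\bar{d}_p=0$ are the $p$-defect zero spin characters enumerated by $f^{0}_{\hat{A},p}(n)$, while those with positive $p$-defect are exactly the ones enumerated by $f^+_{\hat{A},p}(n)$. Consequently the total count splits as $f_{\hat{A}}(n)=f^{0}_{\hat{A},p}(n)+f^+_{\hat{A},p}(n)$, so that $f^+_{\hat{A},p}(n)=f_{\hat{A}}(n)-f^{0}_{\hat{A},p}(n)$. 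This is the same decomposition that underlies the $\hat{S}(n)$ analogue proved just above.

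First I would invoke Corollary \ref{spchar_hatA_mod3_infinitefamily}, which under the hypothesis that $24r+1$ is a quadratic nonresidue modulo $p$ gives $f_{\hat{A}}(pn+r)\equiv 0\pmod{3}$ for all $n\geq 0$. Next I would invoke Theorem \ref{pdefectzero_A_mod3}, which under the identical hypothesis gives $f^{0}_{\hat{A},p}(pn+r)\equiv 0\pmod{3}$. Substituting both into the decomposition above yields $f^+_{\hat{A},p}(pn+r)=f_{\hat{A}}(pn+r)-f^{0}_{\hat{A},p}(pn+r)\equiv 0\pmod{3}$, which is precisely the claim. This mirrors exactly the route by which the $\hat{S}(n)$ corollary was deduced from Corollary \ref{spchar_hatS_mod3_infinitefamily} and Theorem \ref{pdefectzero_S_mod3}, so no new generating-function manipulation is required.

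The one point requiring care, and the closest thing to an obstacle, is confirming that $f_{\hat{A}}$ and $f^{0}_{\hat{A},p}$ are tallied with the same Clifford-theoretic counting convention, so that their difference genuinely enumerates the positive-defect characters. As described in Section \ref{sec:recurs}, a self-associate spin character of $\hat{S}(n)$ contributes two conjugate characters of $\hat{A}(n)$, whereas a non-self-associate pair restricts to a single character of $\hat{A}(n)$; this rule is applied uniformly both to the full set of spin characters and to its defect-zero subset. Since the convention is identical in the two counts, the subtraction introduces no sign or multiplicity discrepancy, and once this consistency is noted the congruence follows immediately from the two cited results.
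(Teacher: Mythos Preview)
Your proof is correct and follows essentially the same route as the paper, which simply states that the result follows from Corollary \ref{spchar_hatA_mod3_infinitefamily} and Theorem \ref{pdefectzero_A_mod3}. You have made explicit the decomposition $f_{\hat{A}}(n)=f^{0}_{\hat{A},p}(n)+f^{+}_{\hat{A},p}(n)$ that underlies the paper's one-line citation, and your check that the Clifford-theoretic counting convention is applied consistently is a reasonable point of care but introduces nothing new.
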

\begin{proof}
The result follows from Corollary \ref{spchar_hatA_mod3_infinitefamily} and Theorem \ref{pdefectzero_A_mod3}.
\end{proof}

\section{Closing Thoughts}
It is intriguing to see that so many of the enumerating functions considered above satisfy infinitely many Ramanujan--like congruences modulo 2 and 3.  And while the proofs given above are satisfying (given their reliance on elementary generating function manipulations and classical product--to--sum results), it may prove profitable to prove these results from a combinatorial perspective.  Such combinatorial proofs can often provide additional insights into the arithmetic behavior of such functions.  

\section{Acknowledgements}
The first author thanks George Andrews for supporting his visit to Pennsylvania State University in November 2014 where this research began. He also thanks Henry Cohn and Massachusetts Institute of Technology as a visitor in March and April 2015 where he worked on the manuscript.  The first author was supported by grant PSC-CUNY TRADA-46-493 and a 2014-2015 Faculty Fellowship Leave from York College, City University of New York. Both authors would like to thank Jorn Olsson for his helpful comments and suggestions.


\begin{thebibliography}{99}

\bibitem[Andr76]{AndrBook} G. E. Andrews, {\it The Theory of Partitions}, Addison-Wesley, Reading 1976; reprinted, Cambridge University Press, Cambridge, 1984, 1998  

\bibitem[BB07]{B-B} N. Baruah and B. Berndt, Partition Identities and Ramanujan's modular equations {\it Journal of Combinatorial Theory, Series A} Volume 114, Issue 6, (2007) 1024--1045

\bibitem[BeOl02]{BO} C. Bessenrodt and J.B. Olsson {\it Prime power degree representations of the double covers of the symmetric and alternating groups}, Journal London Math. Soc. (2) 66 (2002) 313-324

\bibitem[Ber06]{Berndt} B. C. Berndt, {\it Number Theory in the Spirit of Ramanujan}, American Mathematical Society, Providence, RI, 2006

\bibitem[Ca88]{C} M. Cabanes, Local structure of $p$-blocks of $\hat{S}_n$, {\it Math. Z.} (1988) 198:519-548

\bibitem[CHS15]{CHS} S.-C. Chen, M. D. Hirschhorn, and J. A. Sellers, Arithmetic properties of Andrews' singular overpartitions, to appear in the {\it International Journal of Number Theory}

\bibitem[Ga93]{Gar} F. Garvan, Some congruences for partitions that are $p$-cores, {\it Proc. London Math. Soc.} (3) 66 (1993), no. 3, 449--478. 

\bibitem[GKS90]{G-K-S} F. Garvan, D. Kim, and D. Stanton, Cranks and $t$-cores {\it Invent. Math.} {\bf 101} (1990), 1-17

\bibitem[HS99]{H-S-1} M. Hirschhorn and J. A. Sellers, {\it Some parity results for 16-cores} Ramanujan J. {\bf 3} (1999), 281- 

\bibitem[HS96]{H-S-2} M. Hirschhorn and J. A. Sellers, {\it Some amazing facts about 4-cores} J. Number Theory {\it 60} (1996) 51-69

\bibitem[Hu86]{H} J. Humphreys, Blocks of projective representations of the symmetric groups.\ {\it J. London
Math. Soc.} (2), 32:441--452, 1986.

\bibitem[JK81]{J-K} G. James and J. Kerber, {\it The Representation Theory of the Symmetric Groups}. Encyclopedia of Mathematics, 16, Addison-Wesley 1981

\bibitem[Ki97]{K2} I. Kiming, On the existence of $\bar{p}$-core partitions of natural numbers. {\it Quart. J. Math. Oxford} (2), Vol. 48, (1997) 59-65. 

\bibitem[KS99]{K-S} L. W. Kolitsch and J. A. Sellers, {\it Elementary proofs of infinitely many congruences for 8-cores}, Ramanujan J. {\bf 3} (1999), 221-226

\bibitem[KlTi13]{KT} A. Kleschev and P. Tiep, {\it Small-dimensional projective representations of symmetric and alternating groups}, Algebra and Number Theory 6 (2012), 1773--1816.

\bibitem[MO90]{MO} G. Michler and J. Olsson, The Alperin-McKay conjecture holds in covering groups of symmetric and alternating groups, p odd, {\it J. reine ang. Math.} 405 (1990), 78-111.

\bibitem[Mo62]{M} A. O. Morris, The Spin Representation Of The Symmetric Group {\it Proceedings of the London Mathematical Society} (1962) 55-76

\bibitem[Ol87]{O1} J. Olsson, Frobenius symbols for partitions and degrees of spin characters, {\it Math. Scand.} 61 (1987), 223-247.

\bibitem[Ol90]{O2} J. Olsson, On the $p$-blocks of the Symmetric and Alternating Groups and Their Covering Groups {\it Journal of Algebra} {\bf 128} s3-12:1 (1990) 188-213

\bibitem[Ol93]{O3} J. Olsson, {\it Combinatorics of the Representation Theory of the Symmetric Groups} Vorlesungen aus dem FB Mathematik der Univ. Essen, Heft 20, 1993.


\end{thebibliography}
\end{document}